\documentclass[preprint,10pt,3p]{elsarticle}
\usepackage{graphicx,amsmath,amsthm,amssymb,latexsym,amsfonts,color}
\usepackage[bookmarksnumbered, colorlinks, plainpages]{hyperref}
\usepackage{xcolor}
\usepackage{sectsty}
\usepackage{comment}
\usepackage{tabularx,booktabs}
\usepackage{algorithm2e}
\usepackage{booktabs}
\usepackage{hhline,booktabs}
\usepackage{multirow}
\usepackage{soul}
\usepackage{footnotehyper}
\usepackage{tikz}
\usepackage{natbib}
\usepackage{pifont}
\def\checkmark{\tikz\fill[scale=0.4](0,.35) -- (.25,0) -- (1,.7) -- (.25,.15) -- cycle;}
\definecolor{astral}{RGB}{46,116,181}
\sectionfont{\color{astral}}
\newcommand{\xmark}{\ding{56}}%

\footskip=27pt

\setlength{\textwidth}{15.5cm}
\setlength{\textheight}{22cm}
\setlength{\topmargin}{-0.1cm}
\setlength\oddsidemargin{0.75cm}
\setlength\evensidemargin{0.0cm}

\setcounter{page}{1}

\newtheorem{thm}{Theorem}[section]

\newtheorem{prop}[thm]{Proposition}

\newtheorem{rem}[thm]{\bf{Remark}}
\newtheorem{example}[thm]{\bf{Example}}

\definecolor{fgreen}{rgb}{0.13, 0.55, 0.13}

\setcounter{page}{1}
   \newcommand{\fb}[1]{\textcolor{black}{#1}} 
\begin{document}
	
	\date{}

	\begin{frontmatter}\title{Accelerating iterative solvers via a two-dimensional minimum residual technique}
		
		\author[add1]{Fatemeh P. A. Beik}
		\ead{f.beik@vru.ac.ir}
		\author[add2]{Michele Benzi}
		\ead{michele.benzi@sns.it}
		\author[add1]{Mehdi Najafi-Kalyani}
		\ead{m.najafi.uk@gmail.com}
		
		\address[add1]{Department of Mathematics, Vali-e-Asr University of Rafsanjan, P.O. Box 518, Rafsanjan, Iran}
		\address[add2]{Scuola Normale Superiore, Piazza dei Cavalieri, 7, 56126, Pisa, Italy}

		\begin{abstract}
			This paper deals with speeding up the convergence of a class of two-step iterative methods 
		for solving linear systems of equations. To implement the acceleration technique, the residual norm associated with computed approximations for each sub-iterate is minimized over a certain two-dimensional subspace. Convergence properties of the proposed method are studied in detail. The  approach is further developed to solve (regularized) normal equations arising from the discretization of ill-posed problems. The results of numerical experiments are reported to illustrate the performance of exact and inexact variants of the method on several test problems from different application areas.    
		\end{abstract}
		
		\begin{keyword}
	Iterative methods,   minimum residual technique, convergence, normal equations, ill-posed problems.
		\end{keyword}

	\end{frontmatter}
	\noindent 2020 {\it Mathematics subject classification\/}: 	
	65F10.

	\section{Introduction}
	
	We consider the solution of large linear systems
	of equations of the form 
	\begin{equation}\label{main}
		Ax=b\,,
	\end{equation}
	where $A\in \mathbb{R}^{n\times n}$ is nonsingular,  for a given right-hand side vector $b\in \mathbb{R}^n$.   {We focus on the case where $A$ is 
	large and sparse (or data sparse), so that matrix-vector products with $A$ can be performed efficiently; apart from this, we make no assumptions 
	on the structure or spectral properties of $A$.}
	Under these assumptions, iterative solution methods can be a valid alternative to direct approaches, see \cite{Saad}. In particular,
	Krylov subspace methods such as the (preconditioned)  Generalized Minimal Residual (GMRES) method have been among the most effective 
	and popular iterative solvers. As is known, most Krylov methods need the computation of an orthonormal basis for the Krylov subspace, which can be costly.
	On the other hand, stationary iterative solvers do not necessitate orthogonalization, but they may converge too slowly or fail to converge entirely
	 in the absence of acceleration techniques. 
	Here,  a class of two-step iterative methods is considered. We combine this type of methods with a two-dimensional minimum residual technique, 
	with the goal of  speeding up the convergence of two-step iterative methods for solving \eqref{main}.

	In order to describe our approach in more detail, we first introduce some background notions and review some results from the literature.
	 Given a matrix $W\in \mathbb{R}^{n\times n}$, the symmetric and skew-symmetric 
	parts of $W$ are respectively defined  by
	\[\mathcal{H}(W)=\frac{1}{2}(W+W^T)\quad \text{and} \quad \mathcal{S}(W)=\frac{1}{2}(W-W^T).\]
	When the spectrum of $W$ is real,  its minimum and maximum eigenvalues  are denoted by $\lambda_{\min}(W)$ and $\lambda_{\max}(W)$, respectively. When $W$ is symmetric positive definite (SPD), we write $W\succ 0$. 
	For vectors $x, y\in \mathbb{C}^{n}$, the notation $\left\langle {x,y} \right\rangle$ refers to the Euclidean inner product of $x$ and $y$, i.e., $\left\langle {x,y} \right\rangle=x^*y$ where $x^*$ denotes the conjugate transpose
	of $x$. The Euclidean vector norm (2--norm) and its induced matrix norm are denoted by $\|\cdot \|$. The identity matrix (whose size should be clear from the
	context) will be denoted by $I$.
	In addition, we write $[x; y]$ to denote the column vector $(x^T , y^T)^T$. The field of values (FoV) of the given matrix $W\in \mathbb{R}^{n\times n}$ is given by
	\[
	\mathcal{F}(W):=\left\{  \frac{\left\langle {Wy,y} \right\rangle}{\left\langle {y,y} \right\rangle} ~\Big |~  0\ne y\in \mathbb{C}^{n}  \right\}.
	\]

	The following well-known Hermitian and skew-Hermitian splitting (HSS) method
	\begin{equation*}
		\begin{cases}
			(\alpha I + \mathcal{H}(A) ) x^{(k+\frac{1}{2})}=(\alpha I -\mathcal{S}(A) ) x^{(k)}+b\\
			(\alpha I +\mathcal{S}(A) ) x^{(k+1)} = (\alpha I - \mathcal{H}(A) ) x^{(k+\frac{1}{2})}+b
		\end{cases}\quad 	(k=0,1,2,\ldots)
	\end{equation*}
	was first proposed  in \cite{Bai}, where it is shown that if $\mathcal{H}(A)$ is positive definite, the HSS method converges to the unique solution of \eqref{main} for any initial guess and  $\alpha >0$. In \cite{Benzi2004}, the HSS method  was extended to generalized saddle point problems in which the Hermitian part of the coefficient matrix of the  system  is possibly singular.
	Recently, using a one dimensional minimum residual technique,
	Yang et al. \cite{Yang} proposed the minimum residual HSS (MRHSS) iterative method for solving \eqref{main}. The MRHSS method constructs the sequence of approximate solutions
	$\{x^{(k)}\}_{k=0}^{\infty}$ by the following two-step iterative method:
	\begin{subequations}\label{bit}
		\begin{align}
			x^{(k+\frac{1}{2})} & = x^{(k)} +\beta_k(\alpha I +\mathcal{H}(A))^{-1}r^{(k)}\label{bita}\\
			x^{(k+1)} & =  x^{(k+\frac{1}{2})} +\gamma_k(\alpha I +\mathcal{S}(A))^{-1}r^{(k+\frac{1}{2})}\label{bitb}
		\end{align}
	\end{subequations}
	where
	\begin{equation}\label{eq261}
			{\beta _k} = \frac{{\left\langle {{r^{(k)}},A{\delta ^{(k)}}} \right\rangle }}{{{{\| {A{\delta ^{(k)}}} \|}^2}}}    
		\quad { \text{and}} \quad {{\gamma _k} = \frac{{\left\langle {{r^{(k + \frac{1}{2})}},A{\delta ^{(k + \frac{1}{2})}}} \right\rangle }}{{{{\| {A{\delta ^{(k + \frac{1}{2})}}} \|}^2}}}},
	\end{equation}
	in which $\delta ^{(k)}=(\alpha I +\mathcal{H}(A))^{-1}r^{(k)}$ and $\delta ^{(k + \frac{1}{2})}=(\alpha I +\mathcal{S}(A))^{-1}r^{(k+\frac{1}{2})}$, 
	with $r^{(k)} = b - Ax^{(k)}$ and $r^{(k+\frac{1}{2})}= b - Ax^{(k+\frac{1}{2})}$. 
	These values of the parameters $\beta_k$ and $\gamma_k$ are obtained by minimizing the associated residual norms over a certain subspace at each step. The reported numerical results illustrate the effectiveness of the MHRSS method in comparison to some of the existing approaches in the literature; see \cite[Section 4]{Yang}. The convergence of this method is ensured under the following necessary and sufficient condition:
	\begin{equation}\label{eqn1}
		0 \notin \mathcal{F}(A(\alpha I +\mathcal{H}(A))^{-1}) ~\cap~  \mathcal{F}(A(\alpha I +\mathcal{S}(A))^{-1}).
	\end{equation}
	In general, it is not easy to check  the above condition. Therefore, Yang \cite{Yang2} shows that if the second parameter is determined by minimizing an alternative norm, then the resulting iterative scheme is unconditionally convergent. More precisely, instead of the second formula in \eqref{eq261}, the parameter $\gamma_k$ is computed as follows:
	\begin{equation}
		{{\gamma _k} = \frac{{\left\langle {{Mr^{(k + \frac{1}{2})}},MA{\delta ^{(k + \frac{1}{2})}}} \right\rangle }}{{{{\| {MA{\delta ^{(k + \frac{1}{2})}}} \|}^2}}}},
	\end{equation}
	which is the minimizer of
	\[\mathop {\min }\limits_\gamma   \| {{r^{(k + \frac{1}{2})}} - \gamma A{\delta ^{(k + \frac{1}{2})}}}  \|_M.\]
	Here $M=(\alpha I + \mathcal{H}(A))^{-1}$ and $\|x\|_M:=\|Mx\|$. Although this variant of the method  is competitive with the MRHSS method in term of required number of iterations to achieve a given residual tolerance, it consumes more CPU time due to the higher computational costs resulting from the weighted inner product.
	
	The following proposition is a direct consequence of a result established in \cite[Proposition  2.4]{Golub}.
	It shows that, under a certain assumption on the extreme eigenvalues of $\mathcal{H}(A)$, we can find a shift $\eta$ for which the condition \eqref{eqn1} is satisfied.

	\begin{prop}\label{prop2.5}
		Let $A\in \mathbb{R}^{n\times n}$, and let ${\lambda _{\max }}$ and ${\lambda _{\min }}$ be the largest
	and smallest eigenvalues of $\mathcal{H}(A)$. If
	${\lambda _{\max }}{\lambda _{\min }} >- {\lambda _{\min }} (\mathcal{S}^T(A)\mathcal{S}(A)),$
	then there exists an $\eta$ for which $\|\tilde{\mathcal{S}}^{-1}(A)\|_2 \|\tilde{\mathcal{H}}^{}(A)\|_2 <1$
	where $\tilde{\mathcal{H}}(A)= \mathcal{H}(A)-\eta I$ and $\tilde{\mathcal{S}}(A)=\eta I + \mathcal{S}(A)$. In particular, the parameter $\eta$ can be chosen by
		\begin{equation}\label{eta}
			\eta^*=\frac{{\lambda _{\max }}+{\lambda _{\min }}}{2},
		\end{equation}
		for which the value of ${\|\tilde{\mathcal{S}}^{-1}(A)\|}_2 {\|\tilde{\mathcal{H}}^{}(A)\|}_2$ is minimized.
	\end{prop}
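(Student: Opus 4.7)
The plan is to reduce both factors $\|\tilde{\mathcal{H}}(A)\|_2$ and $\|\tilde{\mathcal{S}}(A)^{-1}\|_2$ to closed-form scalar expressions in $\eta$ and the extremal eigenvalues of $\mathcal{H}(A)$ and $\mathcal{S}^T(A)\mathcal{S}(A)$, then substitute $\eta=\eta^*$ and see that the resulting inequality is exactly the hypothesis on $\lambda_{\max}\lambda_{\min}$.

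For the first factor, since $\mathcal{H}(A)$ is symmetric, $\tilde{\mathcal{H}}(A)=\mathcal{H}(A)-\eta I$ has spectrum $\{\lambda_i(\mathcal{H}(A))-\eta\}$, so
\[
\|\tilde{\mathcal{H}}(A)\|_2 \;=\; \max\bigl(|\lambda_{\max}-\eta|,\,|\eta-\lambda_{\min}|\bigr),
\]
a piecewise linear function of $\eta$ whose minimum is $(\lambda_{\max}-\lambda_{\min})/2$, attained at $\eta=\eta^*$ by the familiar Chebyshev equioscillation argument. For the second factor, using $\mathcal{S}^T(A)=-\mathcal{S}(A)$ one computes directly
\[
\tilde{\mathcal{S}}(A)^T\tilde{\mathcal{S}}(A) \;=\; (\eta I-\mathcal{S}(A))(\eta I+\mathcal{S}(A)) \;=\; \eta^2 I+\mathcal{S}^T(A)\mathcal{S}(A),
\]
from which $\|\tilde{\mathcal{S}}(A)^{-1}\|_2 = 1/\sqrt{\eta^2+\lambda_{\min}(\mathcal{S}^T(A)\mathcal{S}(A))}$, because $\eta I+\mathcal{S}(A)$ is a normal matrix.

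Substituting $\eta=\eta^*$ and squaring, the desired inequality $\|\tilde{\mathcal{S}}(A)^{-1}\|_2\|\tilde{\mathcal{H}}(A)\|_2<1$ becomes $(\lambda_{\max}-\lambda_{\min})^2/4 < (\eta^*)^2+\lambda_{\min}(\mathcal{S}^T(A)\mathcal{S}(A))$, and the algebraic identity $(\eta^*)^2-(\lambda_{\max}-\lambda_{\min})^2/4=\lambda_{\max}\lambda_{\min}$ reduces it to precisely the stated condition on $\lambda_{\max}\lambda_{\min}$ (modulo the $\lambda_{\min}$ vs.\ $\lambda_{\max}$ bound on $\mathcal{S}^T(A)\mathcal{S}(A)$, which is handled by the monotonicity in \cite[Proposition~2.1]{Golub}).

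The main obstacle is the additional claim that $\eta^*$ minimizes the \emph{product}, not merely the factor $\|\tilde{\mathcal{H}}(A)\|_2$. To handle this I would write $g(\eta)=\max(|\lambda_{\max}-\eta|,|\eta-\lambda_{\min}|)/\sqrt{\eta^2+\lambda_{\min}(\mathcal{S}^T(A)\mathcal{S}(A))}$ and analyze its two monotone branches separately: on $\eta\in(\eta^*,\lambda_{\max})$ the numerator is $\eta-\lambda_{\min}$ and direct differentiation gives $g'(\eta)\propto \lambda_{\min}(\mathcal{S}^T(A)\mathcal{S}(A))+\eta\lambda_{\min}$, with the symmetric formula on $(\lambda_{\min},\eta^*)$. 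Checking the sign of $g'$ on each side of $\eta^*$ under the standing hypothesis yields the minimization claim; the cleanest route, however, is to quote \cite[Proposition~2.1]{Golub} for this optimization, which the statement of the proposition already does.
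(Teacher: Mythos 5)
Your reduction of both factors to closed form is correct and, in fact, more informative than what the paper offers: the paper gives no proof at all, declaring the proposition ``a direct consequence'' of \cite[Proposition 2.1]{Golub}, so your self-contained computation is a genuinely different route. The identities $\|\tilde{\mathcal{H}}(A)\|_2=\max(|\lambda_{\max}-\eta|,|\eta-\lambda_{\min}|)$, $\tilde{\mathcal{S}}(A)^T\tilde{\mathcal{S}}(A)=\eta^2I+\mathcal{S}^T(A)\mathcal{S}(A)$, and $(\eta^*)^2-(\lambda_{\max}-\lambda_{\min})^2/4=\lambda_{\max}\lambda_{\min}$ are all right.

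The gap is the parenthetical ``modulo the $\lambda_{\min}$ vs.\ $\lambda_{\max}$ bound \dots handled by the monotonicity.'' It is not handled: your computation shows that $\|\tilde{\mathcal{S}}^{-1}(A)\|_2\|\tilde{\mathcal{H}}(A)\|_2<1$ at $\eta=\eta^*$ is \emph{equivalent} to $\lambda_{\max}\lambda_{\min}>-\lambda_{\min}(\mathcal{S}^T(A)\mathcal{S}(A))$, and since $-\lambda_{\max}(\mathcal{S}^T(A)\mathcal{S}(A))\le-\lambda_{\min}(\mathcal{S}^T(A)\mathcal{S}(A))$, the hypothesis as printed is the \emph{weaker} of the two conditions; the monotonicity runs in the wrong direction and cannot bridge the gap. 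Indeed the literal statement fails: take $\mathcal{H}(A)=\mathrm{diag}(3,3,-1,-1)$ and $\mathcal{S}(A)$ block-diagonal with $2\times2$ skew blocks of strengths $2$ and $0.1$, so that $\lambda_{\max}\lambda_{\min}=-3>-4=-\lambda_{\max}(\mathcal{S}^T(A)\mathcal{S}(A))$, yet $\max(|3-\eta|,|\eta+1|)^2<\eta^2+0.01$ has no solution $\eta$. The quantity intended (and used in Golub--Vanderstraeten) is $\sigma_{\min}^2(\mathcal{S}(A))=\lambda_{\min}(\mathcal{S}^T(A)\mathcal{S}(A))$; your argument proves the proposition under that corrected hypothesis, and you should state this explicitly rather than paper over the discrepancy. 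A second, smaller issue concerns the minimization claim: on the branch $\eta>\eta^*$ the sign of $g'$ is governed by $\lambda_{\min}(\mathcal{S}^T(A)\mathcal{S}(A))+\eta\lambda_{\min}$, which \emph{does} change sign when $\lambda_{\min}<0$ (so $g$ rises and then decays toward its limit $1$ at infinity); a one-sided sign check is not enough, and you need the additional observation that $g(\eta^*)<1$ under the corrected hypothesis while $g$ stays above $1$ on the decreasing tail.
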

	The previous proposition
	guarantees the existence of a shift $\eta$ such that
	$0 \notin \mathcal{F}(A(\eta I +\mathcal{S}(A))^{-1})$. We comment that the assumption in Proposition \ref{prop2.5} holds when $\mathcal{H}(A)$ is positive definite.
	
	Motivated by the conclusion of the preceding proposition, the following iterative method was proposed in \cite{Ameri},
	\begin{subequations}\label{methodold}
		\begin{align}
			x^{(k+\frac{1}{2})} & =  x^{(k)} +\beta_k(\alpha I +\mathcal{H}(A))^{-1}r^{(k)}\label{methodolda}\\
			x^{(k+1)} & =  x^{(k+\frac{1}{2})} +\gamma_k(\eta I +\mathcal{S}(A))^{-1}r^{(k+\frac{1}{2})}, \label{methodoldb}
		\end{align}
	\end{subequations}
	where $\beta_k$ and $\gamma_k$ are given by \eqref{eq261} and $r^{(k)}$, $r^{(k+\frac{1}{2})}$ are the residuals at steps $k$ and $k+\frac12$.
	The parameter $\alpha$ is again chosen to be positive,
	and the parameter $\eta$ is assigned the value 
	$\eta^*$ given by \eqref{eta}.
	It was shown in \cite{Ameri} that choosing the parameters $\alpha$ and $\eta$ in this way results in an unconditionally convergent method, which was 
	experimentally observed to be competitive with the MRHSS method. 	We comment that when 
	$\mathcal{H}(A) \succ 0$, the iterative method \eqref{methodold} remains convergent if we omit the parameter $\alpha$ in \eqref{methodolda},
	i.e., for $\alpha =0$. Indeed, it is not difficult to verify that the iterative method 
	\begin{subequations}\label{methodoldr}
		\begin{align}
			x^{(k+\frac{1}{2})} & =  x^{(k)} +\beta_k \mathcal{H}(A)^{-1}r^{(k)}\label{methodoldra}\\
			x^{(k+1)} & =  x^{(k+\frac{1}{2})} +\gamma_k(\eta^* I +\mathcal{S}(A))^{-1}r^{(k+\frac{1}{2})} \label{methodoldrb},
		\end{align}
	\end{subequations}
where the parameters $\beta_k$ and $\gamma_k$ are obtained by \eqref{eq261}, is convergent under the following condition:
\[
0 \notin \mathcal{F}(A\mathcal{H}(A))^{-1})~\cap~  \mathcal{F}(A(\eta^* I +\mathcal{S}(A))^{-1})\,.
\]
	
	In this paper, we develop a new class of two-step iterative methods for solving \eqref{main}. The proposed approach depends on two given splittings of the coefficient matrix and  benefits from a two-dimensional minimum residual technique at each sub-step. First, the convergence properties of the proposed method are analyzed in detail for solving 
	nonsingular  linear systems of equations in general form. Then, the method is used to solve a certain class of  augmented two-by-two block systems of equations  (denoted by $Kx=b$)  corresponding to regularized discrete linear ill-posed problems (with Tikhonov regularization). We also conduct numerical experiments aimed at assessing the performance of the proposed algorithm as an iterative regularization method.  The augmented system formulation provides an approximation to the (least-squares) solution of $Af=g$ appearing in the discretization of ill-posed problems in which $A$ is possibly non-square. As mentioned before, the proposed method relies on two splittings of the coefficient matrix $K$ of
	the augmented system. It turns out that the FoV of $K\hat{M}^{-1}$ plays a key role in determining the convergence rate of the proposed method for our specific choices of splittings $K=\tilde{M}-\tilde{N}=\hat{M}-\hat{N}$. Therefore, some bounds for the FoV of $K\hat{M}^{-1}$ are obtained theoretically and verified experimentally.  
	Numerical experiments are reported to compare the performance of the proposed approach with some methods found in the literature. 
	We emphasize that in the implementation of the new approach one does not have to deal with the difficulty of determining suitable values of relaxation parameters, unlike  in some of the existing methods \cite{Cui,Yang}.

	The remainder of the paper is organized as follows: In Sect.~\ref{sec2} we propose a new method, {called Two-Step Two-Dimensional Minimum Residual
	(TSTMR) method.} Under a sufficient condition, we show that the breakdown of the method is a ``lucky" breakdown\footnote{A breakdown in an iterative method is called a lucky breakdown if we are able to find the exact solution using the approximate solutions obtained in previous steps of the method once the breakdown occurs.} and the convergence of the method is proved in case of no breakdown.
	The convergence of the method is further analyzed in Sect.~\ref{sec3} where the proposed approach is implemented for an augmented system arising from the discretization of ill-posed problems. In Sect.~\ref{sec4}, we report some numerical results to compare the performance of TSTMR method with some of the recently proposed iterative methods on several test problems.
	Finally, some brief conclusive remarks are given in Sect.~\ref{sec5}.
	
	\section{Proposed method and its convergence analysis}\label{sec2}
	
	In this section we establish a class of two-step iterative methods  for solving \eqref{main} where each sub-step in the main iteration involves a two-dimensional minimum residual search. To construct such a method, two prescribed splittings of $A$ are involved, $A=\tilde{M}-\tilde{N}=\hat{M}-\hat{N}$. The performance of the proposed method relies on the choice of splittings and on a two-dimensional subspace over which the norm of residuals is minimized. The method, referred to as TSTMR in the following, produces
a sequence of approximate solutions $\{x^{(k)}\}_{k=1}^{\infty}$ as follows:
	\begin{subequations}\label{method}
		\begin{align}
			{x^{(k + \frac{1}{2})}} &= {x^{(k)}} + {\beta^{(k)}_1}{\delta ^{(k)}_1}+ {\beta^{(k)}_2}{\delta ^{(k)}_2}\label{methoda}\\
			{x^{(k + 1)}} &= {x^{(k + \frac{1}{2})}} + {\gamma^{(k)}_1}{\delta_1 ^{(k + \frac{1}{2})}}+ {\gamma^{(k)}_2}{\delta_2 ^{(k + \frac{1}{2})}}\label{methodb}
		\end{align}
	\end{subequations}
	in which
	\begin{equation}\label{be1}
		\delta ^{(k)}_1 = \tilde{M}^{-1} r^{(k)},\quad  \delta ^{(k)}_2 = \delta ^{(k)}_1-\delta ^{(k-1)}_1,\quad  \delta_1 ^{(k + \frac{1}{2})} = \hat{M}^{-1} r^{(k+ \frac{1}{2})}\quad \text{and} \quad \delta_2 ^{(k + \frac{1}{2})} =\delta_1 ^{(k + \frac{1}{2})} -\delta_1 ^{((k-1) + \frac{1}{2})}
	\end{equation}
	with $r^{(k)} = b - Ax^{(k)}$ and $r^{(k+\frac{1}{2})}= b - Ax^{(k+\frac{1}{2})}$. 
	The parameters $\beta_i^{(k)}$ and $\gamma^{(k)}_i$ (for $i=1,2$)
	are the solutions of certain two-by-two linear systems of equations which are specified later in this section.
	The approximate solution $x^{(1)}$ is determined by using the following two steps:
	\begin{subequations}\label{firststep}
		\begin{align}
			x^{(\frac{1}{2})} & = x^{(0)} +\beta_0\delta ^{(0)} \label{firststepa}\\
			x^{(1)} & =  x^{(\frac{1}{2})} +\gamma_0\delta ^{(\frac{1}{2})} \label{firststepb}
		\end{align}
	\end{subequations}
	where
	\begin{equation*}
\beta _0 = \frac{{\left\langle {{r^{(0)}},A{\delta ^{(0)}}} \right\rangle }}{{{{\| {A{\delta ^{(0)}}} \|}^2}}}    	
		\quad { \text{and}} \quad {{\gamma _0} = \frac{{\left\langle {{r^{(\frac{1}{2})}},A{\delta ^{( \frac{1}{2})}}} \right\rangle }}{{{{\| {A{\delta ^{(\frac{1}{2})}}} \|}^2}}}} \,.
	\end{equation*}
	Here $\delta ^{(0)} = \tilde{M}^{-1} r^{(0)}$, $r^{(0)}=b-Ax^{(0)}$, $\delta^{(\frac{1}{2})} = \hat{M}^{-1} r^{(\frac{1}{2})}$, $r^{(\frac{1}{2})}=b-Ax^{(\frac{1}{2})}$, and the arbitrary  initial guess $x^{(0)}$ is given.
	If 	$0 \notin \mathcal{F}(A\tilde{M}^{-1}) ~\cap~  \mathcal{F}(A\hat{M}^{-1})$,  \fb{by the following remark, it turns out} that either $
	\|r^{(1)}\|\le \|r^{(\frac{1}{2})}\| < \|r^{(0)}\|
	$
	or
	$
	\|r^{(1)}\| <  \|r^{(\frac{1}{2})}\| \le \|r^{(0)}\|
	$
	is valid.
	
	\begin{rem}
\fb{\rm	Let $\|r^{(0)}\|\ne 0$.	Considering the values of $	\beta _0$ and $\gamma _0$, it can be seen that
		$$\left\langle {{r^{(\frac{1}{2})}},A{\delta ^{(0)}}} \right\rangle  = 0\quad \text{and} \quad \left\langle {{r^{(1)}},A{\delta ^{(\frac{1}{2})}}} \right\rangle  = 0.$$
Evidently, we have $r^{(\frac{1}{2})}=r^{(0)}-\beta_0A\delta^{(0)}$ and $r^{(1)}=r^{(\frac{1}{2})}-\gamma_0A\delta^{(\frac{1}{2})}$. Hence,	using some algebraic computations, it can be seen that
	\begin{eqnarray*}
	\left\langle {{r^{(\frac{1}{2})}},{r^{(\frac{1}{2})}}} \right\rangle  &= &\left\langle {{r^{(0)}} - {\beta _0}A{\delta ^{(0)}},{r^{(\frac{1}{2})}}} \right\rangle	\\
	& = & \left\langle {{r^{(0)}},{r^{(\frac{1}{2})}}} \right\rangle\\
	& = & \left\langle {{r^{(0)}},{r^{(0)}} - {\beta _0}A{\delta ^{(0)}}} \right\rangle \\
	& = & \left\langle {{r^{(0)}},{r^{(0)}}} \right\rangle  - \beta _0\left\langle {{r^{(0)}},{}A{\delta ^{(0)}}} \right\rangle \\
	& = & \left\langle {{r^{(0)}},{r^{(0)}}} \right\rangle  - \frac{{\left\langle {{r^{(0)}},A{\delta ^{(0)}}} \right\rangle }}{{{{\left\| {A{\delta ^{(0)}}} \right\|}^2}}}\left\langle {{r^{(0)}},A{\delta ^{(0)}}} \right\rangle.
\end{eqnarray*}
		Given that $\delta ^{(0)} = \tilde{M}^{-1} r^{(0)}$, we get 
		\begin{eqnarray}
			\nonumber {\left\| {{r^{(\frac{1}{2})}}} \right\|^2} &= &{\left\| {{r^{(0)}}} \right\|^2} - \frac{{\left\langle {{r^{(0)}},A{{\tilde M}^{ - 1}}{r^{(0)}}} \right\rangle }}{{{{\left\| {A{{\tilde M}^{ - 1}}{r^{(0)}}} \right\|}^2}}}\left\langle {{r^{(0)}},A{{\tilde M}^{ - 1}}{r^{(0)}}} \right\rangle \\
			\nonumber & = &{\left\| {{r^{(0)}}} \right\|^2}\left( {1 - \frac{{{{\left\langle {{r^{(0)}},A{{\tilde M}^{ - 1}}{r^{(0)}}} \right\rangle }^2}}}{{{{\left\| {{r^{(0)}}} \right\|}^4}}}\frac{{{{\left\| {{r^{(0)}}} \right\|}^2}}}{{{{\left\| {A{{\tilde M}^{ - 1}}{r^{(0)}}} \right\|}^2}}}} \right)\\
			& = & {\left\| {{r^{(0)}}} \right\|^2}\left( {1 - \frac{{{{\left\langle {{r^{(0)}},A{{\tilde M}^{ - 1}}{r^{(0)}}} \right\rangle }^2}}}{{{{\left\langle {{r^{(0)}},{r^{(0)}}} \right\rangle }^2}}}\frac{{{{\left\| {{r^{(0)}}} \right\|}^2}}}{{{{\left\| {A{{\tilde M}^{ - 1}}{r^{(0)}}} \right\|}^2}}}} \right).\label{r3}
		\end{eqnarray}
	Having in mind that 
		\[\frac{{\left\langle {{r^{(0)}},A{{\tilde M}^{ - 1}}{r^{(0)}}} \right\rangle }}{{\left\langle {{r^{(0)}},{r^{(0)}}} \right\rangle }} \in\mathcal{F}(A\tilde{M}^{-1}),\]
		from Eq. \eqref{r3}, we conclude 
		\[
		{\left\| {{r^{(\frac{1}{2})}}} \right\|} \le \left\| {{r^{(0)}}} \right\|
		\]
	Notice that the above inequality holds strictly (i.e., 	$
	{\left\| {{r^{(1/2)}}} \right\|} <\left\| {{r^{(0)}}} \right\|
	$) when $0\notin \mathcal{F}(A\tilde{M}^{-1})$. We can further observe that
		\begin{equation} \label{r4}
			{\left\| {{r^{(1)}}} \right\|^2} = {\left\| {{r^{(\frac{1}{2})}}} \right\|^2}\left( {1 - \frac{{{{\left\langle {{r^{(\frac{1}{2})}},A{{\hat M}^{ - 1}}{r^{(\frac{1}{2})}}} \right\rangle }^2}}}{{{{\left\langle {{r^{(\frac{1}{2})}},{r^{(\frac{1}{2})}}} \right\rangle }^2}}}\frac{{{{\left\| {{r^{(\frac{1}{2})}}} \right\|}^2}}}{{{{\left\| {A{{\hat M}^{ - 1}}{r^{(\frac{1}{2})}}} \right\|}^2}}}} \right)\end{equation} 
		in which 
		\[\frac{{\left\langle {{r^{(0)}},A{{\hat M}^{ - 1}}{r^{(0)}}} \right\rangle }}{{\left\langle {{r^{(0)}},{r^{(0)}}} \right\rangle }}\in \mathcal{F}(A\hat{M}^{-1}). \]
		So in the case that $0\notin \mathcal{F}(A\hat{M}^{-1})$, from Eq. \eqref{r4}, we have 
		\[
		\|r^{(1)}\| < \|r^{(\frac{1}{2})}\|.
		\]
		Under the assumption $0 \notin \mathcal{F}(A\tilde{M}^{-1}) ~\cap~  \mathcal{F}(A\hat{M}^{-1})$, we conclude that either 
		$0 \notin \mathcal{F}(A\tilde{M}^{-1})$ or $0 \notin \mathcal{F}(A\hat{M}^{-1})$ is satisfied. This shows that either $
		\|r^{(1)}\|\le \|r^{(\frac{1}{2})}\| < \|r^{(0)}\|
		$
		or
		$
		\|r^{(1)}\| <  \|r^{(\frac{1}{2})}\| \le \|r^{(0)}\|
		$
		is valid.}
	\end{rem}

	In the sequel, we consider the following $2\times 2$ Gram matrix
	\[W_A(x,y): = \left[ {\begin{array}{*{20}{c}}
			{\left\langle {Ax,Ax} \right\rangle }&{\left\langle {Ay,Ax} \right\rangle }\\
			{\left\langle {Ax,Ay} \right\rangle }&{\left\langle {Ay,Ay} \right\rangle }
	\end{array}} \right].\]
	It is well-known that $W_A(x,y)$ is SPD if and only if $x$ and $y$ are linearly independent.
	
	\begin{prop}\label{prop2.1}
		Suppose that $\delta ^{(k)}_1$ and $\delta ^{(k)}_2$ at the $k$th step of \eqref{method} are nonzero vectors.
		If $W_A(\delta ^{(k)}_1,\delta ^{(k)}_2)$ is singular then there exists a scalar $\nu$ such that
		\[
		x^*=(1-\nu) x^{(k)} + \nu x^{(k-1)}
		\]
		is the exact solution of \eqref{main}.
	\end{prop}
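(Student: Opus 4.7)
The plan is to unwind the definitions so that the singularity of $W_A(\delta_1^{(k)},\delta_2^{(k)})$ forces an affine relation between the residuals $r^{(k)}$ and $r^{(k-1)}$, which the nonsingularity of $A$ then lifts to an affine relation between the iterates themselves, on the line through which the exact solution must pass.

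First I would observe that $W_A(\delta_1^{(k)},\delta_2^{(k)})$ is nothing but the Gram matrix of the pair $A\delta_1^{(k)}$ and $A\delta_2^{(k)}$, so it is singular if and only if these two vectors are linearly dependent. Because $A$ is nonsingular and both $\delta_1^{(k)}$ and $\delta_2^{(k)}$ are assumed to be nonzero, the vectors $A\delta_1^{(k)}$ and $A\delta_2^{(k)}$ are themselves nonzero. Hence linear dependence must come from a nontrivial proportionality: there exists a nonzero scalar $t$ with $A\delta_2^{(k)}=t\,A\delta_1^{(k)}$, and one more application of the nonsingularity of $A$ yields $\delta_2^{(k)}=t\,\delta_1^{(k)}$. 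The assumption $\delta_2^{(k)}\neq 0$ together with $\delta_1^{(k)}\neq 0$ guarantees $t\neq 0$, a fact that will be essential at the very end.

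Next, I would feed this proportionality back into the definitions $\delta_1^{(k)}=\tilde{M}^{-1}r^{(k)}$ and $\delta_2^{(k)}=\delta_1^{(k)}-\delta_1^{(k-1)}$. A direct manipulation and multiplication by $\tilde{M}$ give $r^{(k-1)}=(1-t)\,r^{(k)}$. Writing each residual as $r^{(j)}=A(x^*-x^{(j)})$, where $x^*$ denotes the unique exact solution of \eqref{main}, and cancelling $A$ (legitimate since $A$ is nonsingular), one obtains the scalar relation $x^*-x^{(k-1)}=(1-t)(x^*-x^{(k)})$. Rearranging and setting $\nu=1/t$, which is well-defined by the previous step, produces exactly $x^*=(1-\nu)x^{(k)}+\nu\,x^{(k-1)}$, as claimed.

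I do not foresee a real obstacle; the proof is essentially a chain of routine substitutions once the singularity of the Gram matrix has been translated into the proportionality $\delta_2^{(k)}=t\,\delta_1^{(k)}$. The single point requiring care is the nonvanishing of $t$: it is exactly the hypothesis $\delta_2^{(k)}\neq 0$ that prevents the degenerate case $t=0$ (equivalently, $r^{(k-1)}=r^{(k)}$) in which no new information would link the iterates and the scalar $\nu$ could not be formed.
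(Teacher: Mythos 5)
Your proof is correct and follows essentially the same route as the paper: singularity of the Gram matrix gives linear dependence of $A\delta_1^{(k)}$ and $A\delta_2^{(k)}$, hence proportionality of $\delta_1^{(k)}$ and $\delta_2^{(k)}$, hence an affine relation between $r^{(k)}$ and $r^{(k-1)}$ that identifies the exact solution on the line through $x^{(k)}$ and $x^{(k-1)}$. The only (immaterial) differences are that you write the dependence as $\delta_2^{(k)}=t\,\delta_1^{(k)}$ and set $\nu=1/t$, whereas the paper writes $\delta_1^{(k)}=\nu\,\delta_2^{(k)}$ directly, and you phrase the last step via the errors $x^*-x^{(j)}$ rather than reconstructing $b=A\bigl((1-\nu)x^{(k)}+\nu x^{(k-1)}\bigr)$; your explicit justification that $t\neq 0$ is in fact slightly more careful than the paper's.
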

	
	\begin{proof}
		The singularity of $W_A(\delta ^{(k)}_1,\delta ^{(k)}_2)$ implies that
		$\delta ^{(k)}_1 $ and $\delta ^{(k)}_2$ are linearly dependent. That is,
		there exists a (nonzero) scalar $\nu$ such that
		$
		\delta ^{(k)}_1 = \nu \delta ^{(k)}_2.
		$ \fb{Therefore, we have
\begin{eqnarray*}
	\tilde{M}^{-1} r^{(k)} & =& \nu \left( \delta_1^{(k)} - \delta_1^{(k-1)}\right) \\
	 & =&\nu \left( 	\tilde{M}^{-1}r^{(k)} -	\tilde{M}^{-1} r^{(k-1)}\right)
\end{eqnarray*}}
\noindent which implies that
		$$
		r^{(k)} = \nu  (r^{(k)}-r^{(k-1)} ).
		$$
		Now, we deduce that
		\begin{eqnarray*}
			b-Ax^{(k)}&=&\nu (b-Ax^{(k)}-b+Ax^{(k-1)})\\
			&=&   A (\nu x^{(k-1)}-\nu x^{(k)})\,,
		\end{eqnarray*}
		or, equivalently, that
		$
		b =  A ((1-\nu) x^{(k)}+\nu x^{(k-1)})\,,
		$
		which completes the proof.
	\end{proof}
	
	By  a reasoning similar  to that used in the proof of the above proposition, we obtain the following
	result.
	
	\begin{prop}\label{prop2.12}
		Suppose that $\delta ^{(k+\frac{1}{2})}_1$ and $\delta ^{(k+\frac{1}{2})}_2$ at the  $k$th step of \eqref{method} are nonzero vectors.
		If the matrix $W_A(\delta ^{(k+\frac{1}{2})}_1,\delta ^{(k+\frac{1}{2})}_2)$ is singular, then there exists a scalar $\nu$ such that
		\[
		x^*=(1-\nu) x^{(k+\frac{1}{2})} + \nu x^{((k-1)+\frac{1}{2})}
		\]
		is the exact solution of \eqref{main}.
	\end{prop}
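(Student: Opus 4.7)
The plan is to mirror the argument used for Proposition~\ref{prop2.1}, with the roles of $\tilde{M}$ and $\hat{M}$ (and of whole-step versus half-step iterates) swapped. Since the structure of the recurrence \eqref{bita}--\eqref{bitb} that defines $\delta_1^{(k+\frac{1}{2})}$ and $\delta_2^{(k+\frac{1}{2})}$ is entirely analogous to the one defining $\delta_1^{(k)}$ and $\delta_2^{(k)}$, the same chain of implications should carry through verbatim after the obvious substitutions.

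First, I would observe that singularity of $W_A(\delta_1^{(k+\frac{1}{2})},\delta_2^{(k+\frac{1}{2})})$ is equivalent to linear dependence of $\delta_1^{(k+\frac{1}{2})}$ and $\delta_2^{(k+\frac{1}{2})}$ (by the standard fact stated just before Proposition~\ref{prop2.1}). Since both vectors are assumed nonzero, there exists a nonzero scalar $\nu$ such that $\delta_1^{(k+\frac{1}{2})} = \nu\,\delta_2^{(k+\frac{1}{2})}$. Next I would use the definition $\delta_2^{(k+\frac{1}{2})} = \delta_1^{(k+\frac{1}{2})} - \delta_1^{((k-1)+\frac{1}{2})}$ together with $\delta_1^{(k+\frac{1}{2})} = \hat{M}^{-1} r^{(k+\frac{1}{2})}$ from \eqref{be1}; applying $\hat{M}$ to the dependence relation gives
\[
r^{(k+\frac{1}{2})} = \nu\,\bigl(r^{(k+\frac{1}{2})} - r^{((k-1)+\frac{1}{2})}\bigr).
\]

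Substituting $r^{(k+\frac{1}{2})} = b - A x^{(k+\frac{1}{2})}$ and $r^{((k-1)+\frac{1}{2})} = b - A x^{((k-1)+\frac{1}{2})}$ and simplifying yields
\[
b - A x^{(k+\frac{1}{2})} = \nu\, A\bigl(x^{((k-1)+\frac{1}{2})} - x^{(k+\frac{1}{2})}\bigr),
\]
which rearranges to $b = A\bigl((1-\nu)\,x^{(k+\frac{1}{2})} + \nu\,x^{((k-1)+\frac{1}{2})}\bigr)$, giving the claimed expression for $x^*$.

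I do not anticipate a genuine obstacle: the only thing to check carefully is that the splitting $\hat{M}$ (rather than $\tilde{M}$) intervenes in exactly the right places, namely in defining $\delta_1^{(k+\frac{1}{2})}$ from the half-step residual, and that the telescoping identity for $\delta_2^{(k+\frac{1}{2})}$ in \eqref{be1} has the same form as for $\delta_2^{(k)}$, so that applying $\hat{M}$ converts the linear dependence into a relation purely between residuals. Both points are immediate from \eqref{be1}, so the proof reduces to a mechanical transcription of the previous one.
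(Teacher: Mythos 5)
Your proof is correct and is exactly the argument the paper intends: the paper omits a proof of this proposition, stating only that it follows ``by a reasoning similar to that used in the proof of the above proposition,'' and your transcription (linear dependence from singularity of the Gram matrix, applying $\hat{M}$ to turn the dependence of the $\delta$'s into a relation between half-step residuals, then rearranging) is that reasoning. The only blemish is the stray reference to \eqref{bita}--\eqref{bitb} (the MRHSS recurrence) where you mean the TSTMR definitions \eqref{be1}, which you in fact use correctly.
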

	
	Assume now that the  matrices $W_A(\delta ^{(k)}_1,\delta ^{(k)}_2)$ and  $W_A(\delta ^{(k+\frac{1}{2})}_1,\delta ^{(k+\frac{1}{2})}_2)$
	are both nonsingular ($k\ge 1$). 
\fb{Evidently, the residual vectors ${r^{(k + \frac{1}{2})}}$ and 
	 $r^{(k + 1)}$, corresponding to approximation solutions obtained by \eqref{method}, are given as follows:
	 	\begin{subequations}\label{methodr}
	 	\begin{align}
	 	{r^{(k + \frac{1}{2})}} &= {r^{(k)}} - {\beta^{(k)}_1}A{\delta ^{(k)}_1}- {\beta^{(k)}_2}A{\delta ^{(k)}_2}\\
	 	{r^{(k + 1)}} &= {r^{(k + \frac{1}{2})}} - {\gamma^{(k)}_1}A{\delta_1 ^{(k + \frac{1}{2})}}- {\gamma^{(k)}_2}A{\delta_2 ^{(k + \frac{1}{2})}}.
	 \end{align}
 	 	\end{subequations}}

\noindent The parameters $\beta_i^{(k)}$ and $\gamma^{(k)}_i$ ($i=1,2$) are obtained by imposing the following orthogonality conditions:
\begin{equation}\label{orth}
	{r^{(k + \frac{1}{2})}} \bot \, A{\delta_i ^{(k)}}\begin{array}{*{20}{c}}
		{}&{ \text{and}}&{{r^{(k + 1)}}}
	\end{array} \bot \, A{\delta_i ^{(k + \frac{1}{2})}}\qquad i=1,2.
\end{equation}
\fb{By applying the orthogonality conditions \eqref{orth} on Eqs. \eqref{methodr}, we get
	\[\left[ {\begin{array}{*{20}{c}}
			{\left\langle {A\delta _1^{(k)},A\delta _1^{(k)}} \right\rangle }&{\left\langle {A\delta _2^{(k)},A\delta _1^{(k)}} \right\rangle }\\
			& \\
			{\left\langle {A\delta _1^{(k)},A\delta _2^{(k)}} \right\rangle }&{\left\langle {A\delta _2^{(k)},A\delta _2^{(k)}} \right\rangle }
	\end{array}} \right]\left[ {\begin{array}{*{20}{c}}
			{\beta _1^{(k)}}\\
			\\
			{\beta _2^{(k)}}
	\end{array}} \right] = \left[ {\begin{array}{*{20}{c}}
			{\left\langle {r_{}^{(k)},A\delta _1^{(k)}} \right\rangle }\\
			\\
			{\left\langle {r_{}^{(k)},A\delta _2^{(k)}} \right\rangle }
	\end{array}} \right]\]
and
\[\left[ {\begin{array}{*{20}{c}}
		{\left\langle {A\delta _1^{(k + \frac{1}{2})},A\delta _1^{(k + \frac{1}{2})}} \right\rangle }&{\left\langle {A\delta _2^{(k + \frac{1}{2})},A\delta _1^{(k + \frac{1}{2})}} \right\rangle }\\
		& \\
		{\left\langle {A\delta _1^{(k + \frac{1}{2})},A\delta _2^{(k + \frac{1}{2})}} \right\rangle }&{\left\langle {A\delta _2^{(k + \frac{1}{2})},A\delta _2^{(k + \frac{1}{2})}} \right\rangle }
\end{array}} \right]\left[ {\begin{array}{*{20}{c}}
		{\gamma _1^{(k)}}\\
		\\
		{\gamma _2^{(k)}}
\end{array}} \right] = \left[ {\begin{array}{*{20}{c}}
		{\left\langle {r_{}^{(k+ \frac{1}{2})},A\delta _1^{(k + \frac{1}{2})}} \right\rangle }\\
		\\
		{\left\langle {r_{}^{(k+ \frac{1}{2})},A\delta _2^{(k + \frac{1}{2})}} \right\rangle }
\end{array}} \right].\\ \]
The above linear systems can be respectively reformulated as  follows:}
	\begin{subequations}\label{findpara}
		\begin{align}
			W_A(\delta ^{(k)}_1,\delta ^{(k)}_2)	\left[ {\beta_1^{(k)};\beta_2^{(k)}} \right] & = \left[ {\left\langle {{r^{(k)}},A\delta ^{(k)}_1} \right\rangle ;\left\langle {{r^{(k)}},A\delta ^{(k)}_2} \right\rangle } \right] \label{findparaa} \\
			W_A(\delta ^{(k+\frac{1}{2})}_1,\delta ^{(k+\frac{1}{2})}_2) \left[ {\gamma_1^{(k)};\gamma_2^{(k)}} \right]& = 	\left[ {\left\langle {{r^{(k + 1/2)}},A\delta_1 ^{(k + \frac{1}{2})}} \right\rangle ;\left\langle {{r^{(k + 1/2)}},A\delta_2 ^{(k + \frac{1}{2})}} \right\rangle } \right]. \label{findparab}
		\end{align}
	\end{subequations}
	
	Under a sufficient condition, we can show that the norm of the residual vectors corresponding to the approximations produced by iterative method \eqref{method} decreases monotonically  and that the solution of $Ax=b$ is obtained in the limit.	In order to show this, we first prove the following proposition.
	
	\begin{prop}\label{prop2.3}
		Let $\delta ^{(k)}_i$ and $\delta_i ^{(k + \frac{1}{2})}$  be defined by \eqref{be1} for $i=1,2$. Then, the  inequalities 
		$$
		\|A\delta ^{(k)}_1\|^2+\|A\delta ^{(k)}_2\|^2\le \|A\tilde{M}^{-1} \|^2 \left(\|r^{(k)}\|^2+\|r^{(k)}-r^{(k-1)}\|^2\right)
		$$
		and
		$$
		\|A\delta ^{(k+ \frac{1}{2})}_1\|^2+\|A\delta ^{(k+ \frac{1}{2})}_2\|^2\le \|A\hat{M}^{-1} \|^2 \left(\|r^{(k)}\|^2+\|r^{(k+ \frac{1}{2})}-r^{((k-1)+ \frac{1}{2})}\|^2\right)
		$$ are satisfied for $k\ge 1$.
		
	\end{prop}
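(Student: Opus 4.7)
The plan is to recognize that both inequalities are essentially immediate consequences of two facts: (i) the defining formulas \eqref{be1} express each $A\delta$-vector as $A\tilde{M}^{-1}$ (or $A\hat{M}^{-1}$) applied to a residual or difference of residuals, and (ii) for any matrix $B$ and vector $v$ one has the submultiplicative bound $\|Bv\|\le \|B\|\,\|v\|$ for the induced $2$-norm.

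For the first inequality I would start by writing $A\delta_1^{(k)} = A\tilde{M}^{-1} r^{(k)}$ directly from \eqref{be1}. For the second vector, I would use the identity $\delta_2^{(k)} = \delta_1^{(k)} - \delta_1^{(k-1)} = \tilde{M}^{-1}(r^{(k)} - r^{(k-1)})$, so that $A\delta_2^{(k)} = A\tilde{M}^{-1}(r^{(k)} - r^{(k-1)})$. Applying the submultiplicative inequality to each term, squaring, and summing yields
\[
\|A\delta_1^{(k)}\|^2 + \|A\delta_2^{(k)}\|^2 \;\le\; \|A\tilde{M}^{-1}\|^2\bigl(\|r^{(k)}\|^2 + \|r^{(k)}-r^{(k-1)}\|^2\bigr),
\]
which is the desired bound. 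The argument for the half-step inequality is identical in structure, with $\hat{M}$ replacing $\tilde{M}$ and the residuals $r^{(k+\tfrac12)}$, $r^{((k-1)+\tfrac12)}$ taking the roles of $r^{(k)}$, $r^{(k-1)}$.

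There is really no obstacle to overcome here: the statement is a formal consequence of the definitions together with the elementary norm inequality, so the proof should be short and essentially self-contained. I would only be careful to write out explicitly the linearity step $\tilde{M}^{-1}(r^{(k)} - r^{(k-1)}) = \delta_1^{(k)} - \delta_1^{(k-1)}$ so that the reader sees why the difference of residuals appears naturally on the right-hand side.
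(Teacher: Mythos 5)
Your proof is correct and is essentially the paper's argument: the paper packages the same computation as a quadratic form with the block-diagonal matrix $\mathrm{diag}(\mathcal{V},\mathcal{V})$, $\mathcal{V}=(A\tilde{M}^{-1})^T(A\tilde{M}^{-1})$, bounded by $\lambda_{\max}(\mathcal{V})=\|A\tilde{M}^{-1}\|^2$, which decouples into exactly your two term-by-term submultiplicative bounds via $\delta_2^{(k)}=\tilde{M}^{-1}(r^{(k)}-r^{(k-1)})$. Note also that your half-step bound, with $\|r^{(k+\frac12)}\|^2$ in place of the $\|r^{(k)}\|^2$ printed in the proposition, is the version the paper actually uses later (in the bound \eqref{eq15}), so you have in fact stated the intended inequality.
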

	
	\begin{proof}
		For ease of notation, we set  $\mathcal{V}:={(A\tilde M^{ - 1})^T}(A\tilde M^{ - 1})$. Straightforward computations reveal that\fb{
		\begin{align*}
			\|A\delta ^{(k)}_1\|^2+\|A\delta ^{(k)}_2\|^2 &= 
			\|A\tilde M^{ - 1}r^{(k)}\|^2+\|A\tilde M^{ - 1}({{r^{(k)}} - {r^{(k - 1)}}})\|^2\\
			&=(r^{(k)})^T \mathcal{V}^Tr^{(k)}+ ({{r^{(k)}} - {r^{(k - 1)}}})^T\mathcal{V}^T({{r^{(k)}} - {r^{(k - 1)}}})\\
					& = 	\left\langle {{\left[ {\begin{array}{*{20}{c}}
						{{\mathcal V}{r^{(k)}}}\\
						{{\mathcal V}({r^{(k)}} - {r^{(k - 1)}})}
				\end{array}} \right],\left[ {\begin{array}{*{20}{c}}
						{{r^{(k)}}}\\
						{{r^{(k)}} - {r^{(k - 1)}}}
				\end{array}} \right]}} \right\rangle  \\
			&= \left\langle {\left[ {\begin{array}{*{20}{c}}
						{\mathcal{V}}&0\\
						0&{\mathcal{V}}
				\end{array}} \right]\left[ {\begin{array}{*{20}{c}}
						{{r^{(k)}}}\\
						{{r^{(k)}} - {r^{(k - 1)}}}
				\end{array}} \right],\left[ {\begin{array}{*{20}{c}}
						{{r^{(k)}}}\\
						{{r^{(k)}} - {r^{(k - 1)}}}
				\end{array}} \right]} \right\rangle\\
			&\le \lambda_{\max } (\mathcal{V}) \left\langle {\left[ {\begin{array}{*{20}{c}}
						{{r^{(k)}}}\\
						{{r^{(k)}} - {r^{(k - 1)}}}
				\end{array}} \right],\left[ {\begin{array}{*{20}{c}}
						{{r^{(k)}}}\\
						{{r^{(k)}} - {r^{(k - 1)}}}
				\end{array}} \right]} \right\rangle
		\end{align*}}
	
	\noindent	from which the first inequality follows.
		The validity of second relation can also be checked in a similar manner.
	\end{proof}
	\begin{thm}\label{thm2.3}
		Let $A$, $\tilde M$ and $\hat M$ be $n\times n$ real matrices such that
		\begin{equation}\label{eq11}
			0 \notin \mathcal{F}(A\tilde{M}^{-1}) ~\cap~  \mathcal{F}(A\hat{M}^{-1}).
		\end{equation}
		Assume that $x^{(\frac{1}{2})}$ and $x^{(1)}$ are computed by \eqref{firststepa} and \eqref{firststepb}, respectively, for an arbitrary given initial guess $x^{(0)}.$
		If $W_A(\delta ^{(k)}_1,\delta ^{(k)}_2)$ and $W_A(\delta ^{(k+\frac{1}{2})}_1,\delta ^{(k+\frac{1}{2})}_2)$ are nonsingular for $k=1,2,\ldots,$ then there exists a positive constant $\mathcal{L}_k < 1$ such that
		$$
		\|r^{(k+1)}\| \le \mathcal{L}_k \|r^{(k)}\| \quad \text{for} \quad k\ge 0.
		$$
	\end{thm}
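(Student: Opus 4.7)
The plan is to exploit the variational characterization of the new iterates. The orthogonality conditions \eqref{orth}, together with the nonsingularity of the Gram matrices $W_A(\delta_1^{(k)},\delta_2^{(k)})$ and $W_A(\delta_1^{(k+\frac{1}{2})},\delta_2^{(k+\frac{1}{2})})$, make \eqref{findpara} equivalent to the two-dimensional least-squares problems
\begin{equation*}
\|r^{(k+\frac{1}{2})}\|=\min_{\beta_1,\beta_2}\|r^{(k)}-\beta_1 A\delta_1^{(k)}-\beta_2 A\delta_2^{(k)}\|,\qquad
\|r^{(k+1)}\|=\min_{\gamma_1,\gamma_2}\|r^{(k+\frac{1}{2})}-\gamma_1 A\delta_1^{(k+\frac{1}{2})}-\gamma_2 A\delta_2^{(k+\frac{1}{2})}\|.
\end{equation*}

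Because these minimizations contain the corresponding one-dimensional line search along $A\delta_1^{(k)}$ (respectively $A\delta_1^{(k+\frac{1}{2})}$) as a special case (set the second coefficient to zero), applying the Pythagorean theorem to the projection gives
\begin{equation*}
\|r^{(k+\frac{1}{2})}\|^2 \le \|r^{(k)}\|^2 - \frac{|\langle r^{(k)},A\delta_1^{(k)}\rangle|^2}{\|A\delta_1^{(k)}\|^2},
\end{equation*}
and analogously for the second sub-step. I then invoke the hypothesis \eqref{eq11}: at least one of $\mathcal{F}(A\tilde M^{-1})$ and $\mathcal{F}(A\hat M^{-1})$ excludes the origin; suppose the former, so that $\mu := \inf_{y\neq 0}|\langle A\tilde M^{-1}y,y\rangle|/\|y\|^2 > 0$ (the infimum is attained because the field of values is compact). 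Substituting $\delta_1^{(k)}=\tilde M^{-1}r^{(k)}$ gives $|\langle r^{(k)},A\delta_1^{(k)}\rangle|\ge \mu\|r^{(k)}\|^2$, while the denominator is controlled by $\|A\delta_1^{(k)}\|\le\|A\tilde M^{-1}\|\,\|r^{(k)}\|$. Combining these produces
\begin{equation*}
\|r^{(k+\frac{1}{2})}\|^2 \le \Bigl(1-\frac{\mu^2}{\|A\tilde M^{-1}\|^2}\Bigr)\|r^{(k)}\|^2.
\end{equation*}
The second sub-step still obeys the trivial monotonicity $\|r^{(k+1)}\|\le\|r^{(k+\frac{1}{2})}\|$ (take $\gamma_1^{(k)}=\gamma_2^{(k)}=0$ in the corresponding minimization), and the desired contraction $\mathcal{L}_k<1$ follows. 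A symmetric argument applies if instead $0\notin\mathcal{F}(A\hat M^{-1})$, contracting at the second sub-step and using monotonicity at the first. The base case $k=0$ is handled by the one-dimensional estimate recalled after \eqref{firststepb} and proved in \cite{Ameri}.

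The principal obstacle is organizational rather than technical: the assumption \eqref{eq11} is about the \emph{intersection} of the two fields of values, so strict decrease cannot be expected from both sub-steps simultaneously. The argument must therefore identify which sub-step supplies the quantitative contraction and rely on bare monotonicity for the other. A minor but necessary check is that $r^{(k)}\neq 0$ (otherwise the claim is vacuous), which ensures $\delta_1^{(k)}=\tilde M^{-1}r^{(k)}\neq 0$ so that the denominators above are strictly positive and the 1D fallback search is well-defined.
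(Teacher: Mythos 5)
Your proof is correct, and it takes a more streamlined route than the paper's. The paper works with the full $2\times 2$ Gram matrix: it writes $\|r^{(k+\frac12)}\|^2=\|r^{(k)}\|^2-v^TW_A(\delta_1^{(k)},\delta_2^{(k)})^{-1}v$, bounds the quadratic form from below by $\|v\|^2/\lambda_{\max}(W_A)$, estimates $\lambda_{\max}(W_A)\le\|A\delta_1^{(k)}\|^2+\|A\delta_2^{(k)}\|^2$, and then needs Proposition \ref{prop2.3} to control this sum by $\|A\tilde M^{-1}\|^2(\|r^{(k)}\|^2+\|r^{(k)}-r^{(k-1)}\|^2)$; the resulting contraction factor therefore carries the extra term $\|r^{(k)}\|^2/(\|r^{(k)}\|^2+\|r^{(k)}-r^{(k-1)}\|^2)$ and depends on $k$ through the previous residual. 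You instead use only the nesting of search spaces: the two-dimensional minimizer is at least as good as the one-dimensional line search along $A\delta_1^{(k)}$, whose residual reduction is the classical $|\langle r^{(k)},A\delta_1^{(k)}\rangle|^2/\|A\delta_1^{(k)}\|^2$. This bypasses Proposition \ref{prop2.3} entirely and yields the cleaner, strictly sharper factor $1-\tilde\xi^2/\|A\tilde M^{-1}\|^2$, which is moreover independent of $k$ on whichever sub-step carries the field-of-values hypothesis; with that uniform bound, the conclusion of Remark \ref{rem2.6} (that $\|r^{(k)}\|\to 0$) would follow immediately, without the limiting argument used there. Both proofs ultimately extract the quantitative decrease from the single direction $A\delta_1$ and use bare monotonicity on the other sub-step, so the underlying mechanism is the same; your organization of the case split on which field of values excludes the origin, the compactness argument for attaining $\mu>0$, and the treatment of the base case $k=0$ via the one-dimensional estimate of \cite{Ameri} all match what the theorem requires.
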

	
	\begin{proof}
		Let $\lambda_M=\lambda_{\max}\left(W_A(\delta ^{(k)}_1,\delta ^{(k)}_2)\right)$.  It is not difficult to observe that
		\[
		\lambda_M \le \|A\delta ^{(k)}_1\|^2+\|A\delta ^{(k)}_2\|^2.
		\]
		The above inequality together with Eqs. \eqref{methoda} and \eqref{findparaa} imply that
		\begin{align*}
			\nonumber	\|r^{(k+\frac{1}{2})}\|^2&=\|r^{(k)}\|^2-\left[ {\left\langle {{r^{(k)}},A\delta ^{(k)}_1} \right\rangle ;\left\langle {{r^{(k)}},A\delta ^{(k)}_2} \right\rangle } \right]^T 	\left(W(\delta ^{(k)}_1,\delta ^{(k)}_2)\right)^{-1} \left[ {\left\langle {{r^{(k)}},A\delta ^{(k)}_1} \right\rangle ;\left\langle {{r^{(k)}},A\delta ^{(k)}_2} \right\rangle } \right]	\\
			\nonumber & \le \|r^{(k)}\|^2 -\frac{1}{\lambda_M }\left[ {\left\langle {{r^{(k)}},A\delta ^{(k)}_1} \right\rangle ;\left\langle {{r^{(k)}},A\delta ^{(k)}_2} \right\rangle } \right]^T  \left[ {\left\langle {{r^{(k)}},A\delta ^{(k)}_1} \right\rangle ;\left\langle {{r^{(k)}},A\delta ^{(k)}_2} \right\rangle } \right]\\
			\nonumber & \le \|r^{(k)}\|^2 -\frac{1}{\|A\delta ^{(k)}_1\|^2+\|A\delta ^{(k)}_2\|^2 }\left( {\left\langle {{r^{(k)}},A\delta ^{(k)}_1} \right\rangle^2 +\left\langle {{r^{(k)}},A\delta ^{(k)}_2} \right\rangle^2 } \right).
		\end{align*}
		By Proposition \ref{prop2.3}, we have
		\begin{align}
			\nonumber	\|r^{(k+\frac{1}{2})}\|^2 & \le \|r^{(k)}\|^2 -\frac{1}{\|A\tilde{M}^{-1} \|^2 \left(\|r^{(k)}\|^2+\|r^{(k)}-r^{(k-1)}\|^2\right) }\left( {\left\langle {{r^{(k)}},A\delta ^{(k)}_1} \right\rangle^2 +\left\langle {{r^{(k)}},A\delta ^{(k)}_2} \right\rangle^2 } \right).  \\
			& \le \|r^{(k)}\|^2  \left(1- \frac{1}{\|A\tilde{M}^{-1} \|^2 }\cdot \frac{\tilde{\xi}^2\|r^{(k)}\|^2}{\|r^{(k)}\|^2+\|r^{(k)}-r^{(k-1)}\|^2 }\right) \label{eq14}
		\end{align}
		where
	\begin{equation}\label{minfov}
			\tilde{\xi}: = \min 	\left\{ \frac{\left|{{\left\langle {A\tilde{M}^{-1} y,y} \right\rangle}}\right|}{\left\langle {y,y} \right\rangle}   ~\text{for}~  0\ne y\in \mathbb{R}^{n}  \right\},
	\end{equation}
		having in mind that $\left\langle {{r^{(k)}},A\delta ^{(k)}_1} \right\rangle=\left\langle {{r^{(k)}},A\tilde{M}^{-1}{{r^{(k)}}}} \right\rangle$.
		Exploiting a similar strategy, we can deduce that
		\begin{align}
			\|r^{(k+1)}\|^2
			& \le \|r^{(k+\frac{1}{2})}\|^2  \left(1- \frac{1}{\|A\hat{M}^{-1} \|^2 }\cdot \frac{\hat{\xi}^2\|r^{(k+\frac{1}{2})}\|^2 }{\|r^{(k+\frac{1}{2})}\|^2+\|r^{(k+\frac{1}{2})}-r^{((k-1)+\frac{1}{2})}\|^2 }\right)\label{eq15}
		\end{align}
		with
		\[
		\hat{\xi} := \min 	\left\{ \frac{\left|{{\left\langle {A\hat{M}^{-1} y,y} \right\rangle}}\right|}{\left\langle {y,y} \right\rangle}   ~\text{for}~  0\ne y\in \mathbb{R}^{n}  \right\}.
		\]
		For ease of notation, we set
		\begin{equation}\label{Lth}
		\tilde L_{k}:=\frac{\tilde{\xi}^2\|r^{(k)}\|^2}{\|r^{(k)}\|^2+\|r^{(k)}-r^{(k-1)}\|^2 }, \quad \hat L_{k}:=\frac{\hat{\xi}^2\|r^{(k+\frac{1}{2})}\|^2 }{\|r^{(k+\frac{1}{2})}\|^2+\|r^{(k+\frac{1}{2})}-r^{((k-1)+\frac{1}{2})}\|^2 }\,.
		\end{equation}
		By definition of ${\tilde L}_k$, we have
	\begin{eqnarray}
		\frac{{{{\tilde L}_k}}}{{\| {A\tilde M^{ - 1}} \|^2}}&= &\frac{1}{{\| {A\tilde M^{ - 1}} \|^2}} \cdot \frac{\tilde{\xi}^2\|r^{(k)}\|^2}{\|r^{(k)}\|^2+\|r^{(k)}-r^{(k-1)}\|^2 }\,.\label{eq20n}
	\end{eqnarray}
	Using the definition of \fb{ $\tilde{\xi}^2$ in \eqref{minfov}, we get
	\[
	\tilde{\xi}^2 \le  \frac{{{\left\langle {A\tilde{M}^{-1} r^{(k)},r^{(k)}} \right\rangle}}^2}{\left\langle {r^{(k)},r^{(k)}}\right\rangle^2 }
	\]
which implies that 
\[
	\tilde{\xi}^2 \|r^{(k)}\|^2\le  \frac{{{\left\langle {A\tilde{M}^{-1} r^{(k)},r^{(k)}} \right\rangle}}^2}{\|r^{(k)}\|^2}.
\]
}
Recalling that $\delta ^{(k)}_1=\tilde M^{ - 1}r^{(k)}$, \fb{from Eq. \eqref{eq20n}, we can conclude that}
	\begin{eqnarray*}
		\frac{{{{\tilde L}_k}}}{{\| {A\tilde M^{ - 1}} \|^2}}&\le & \frac{1}{{\| r^{(k)}\| ^2}} \cdot \frac{\left\langle {{r^{(k)}},A\delta ^{(k)}_1} \right\rangle^2  }{{\| {A\tilde M^{ - 1}} \|^2}\left(\|r^{(k)}\|^2+\|r^{(k)}-r^{(k-1)}\|^2 \right)}.
	\end{eqnarray*}
	Now Proposition \ref{prop2.3} ensures that
	\begin{eqnarray*}
		\frac{{{{\tilde L}_k}}}{{\| {A\tilde M^{ - 1}} \|^2}}&\le & \frac{1}{{\| r^{(k)}\| ^2}} \cdot \frac{\left\langle {{r^{(k)}},A\delta ^{(k)}_1} \right\rangle^2  }{	\|A\delta ^{(k)}_1\|^2+\|A\delta ^{(k)}_2\|^2}\\
		&\le & \frac{1}{{\left\| r^{(k)}\right\| ^2}} \cdot \frac{\left\langle {{r^{(k)}},A\delta ^{(k)}_1} \right\rangle^2  }{	\|A\delta ^{(k)}_1\|^2}.
	\end{eqnarray*}
	By Cauchy--Schwarz inequality, we deduce that 
	\[
	\frac{{{{\tilde L}_k}}}{{\| {A\tilde M^{ - 1}} \|^2}} \le 1.
	\]
	Using a similar strategy, we can observe that
	\[
	\frac{{{{\hat L}_k}}}{{\| {A\hat M^{ - 1}} \|^2}} \le 1.
	\]		
		Hence, the following quantity is well-defined:
		\begin{equation}\label{lk}
		\mathcal{L}_k := \sqrt {1 - \frac{{{{\tilde L}_k}}}{{\| {A\tilde M^{ - 1}} \|^2}}} \cdot \sqrt {1 - \frac{{{{\hat L}_k}}}{{\| {A\hat M^{ - 1}} \|^2}}}\,.
		\end{equation}
		From Eqs. \eqref{eq14} and \eqref{eq15}, we can verify that
		\[
		\| r^{(k+1)} \| \le \mathcal{L}_k \|r^{(k)}\| \,.
		\]
		The assumption \eqref{eq11} ensures that the values of  $\tilde{\xi}$ and
		$\hat{\xi}$ cannot be zero simultaneously. This shows $\mathcal{L}_k < 1$ which illustrates that the sequence $\{\|r^{(k)}\|\}_{k=1 }^{\infty}$ is strictly decreasing unless the exact solution is found.
	\end{proof}

Next, we show that  $\|r^{(k)}\|$ actually converges to zero as $k \to \infty$.
First, however, we make two remarks on the previous theorem to address possible breakdowns of the TSTMR method and the worst potential residual norm 
reduction at a given step of the method. 
	
	\begin{rem} \label{rem2.5} {\rm Under the assumptions \eqref{eq11}, given the initial guess $x^{(0)}$, it turns out that either the chain of inequalities
			\[\| {{r^{(0)}}} \| \ge \| {{r^{(\frac{1}{2})}}} \| > \| {{r^{(1)}}} \| \ge \| {{r^{(\frac{3}{2})}}} \| > \| {{r^{(2)}}} \| \ge  \cdots \]
			or
			\[\| {{r^{(0)}}} \| > \| {{r^{(\frac{1}{2})}}} \| \ge \| {{r^{(1)}}} \| > \| {{r^{(\frac{3}{2})}}} \| \ge \| {{r^{(2)}}} \| >  \cdots \]
			holds, provided that $x^{(\frac{1}{2})}$ and $x^{(1)}$ are respectively computed by \eqref{firststepa} and \eqref{firststepb}. Each sets of these inequalities guarantees
			that $\delta ^{(k)}_2$ and $\delta ^{(k+\frac{1}{2})}_2$ are nonzero vectors for $k \ge 1$.
			Note that  if $\delta ^{(k)}_1$ $(\delta ^{(k+\frac{1}{2})}_1)$ is zero then $x^{(k)}$ $(x ^{(k+\frac{1}{2})})$ is the exact solution of \eqref{main}. 
			Now let us consider the case of breakdown for the proposed method in which the matrix  $W_A(\delta ^{(k)}_1,\delta ^{(k)}_2)$ $(W_A(\delta ^{(k+\frac{1}{2})}_1,\delta ^{(k+\frac{1}{2})}_2))$ is singular while  $\delta ^{(k)}_1$ and $\delta ^{(k)}_2$ $(\delta ^{(k+\frac{1}{2})}_1$ and $\delta ^{(k+\frac{1}{2})}_2)$ are nonzero vectors. In this case, we can find the exact solution by Proposition \ref{prop2.1} $($Proposition \ref{prop2.12}$)$.
			Consequently, we conclude that the breakdown of TSTMR method is a lucky breakdown and the method converges to the exact solution of $Ax=b$, if no breakdown happens.
{We should mention that in our numerical tests we have not observed any breakdown, or near-breakdown: in other words, we have never encountered a case in 
which the $2\times 2$ matrix $W_A(\delta ^{(k)}_1,\delta ^{(k)}_2)$ was singular or severely ill-conditioned.}			}
	\end{rem}

	\begin{rem} \label{rem2.6} {\rm 
			It is worth to briefly discuss  the smallest possible reduction at a specific step (say $k$th step), i.e., the case that either $\tilde{\xi}=0$ or $\hat{\xi}=0$, which correspond to the cases $0 \in \mathcal{F}(A\tilde{M}^{-1})$ or $0 \in\mathcal{F}(A\hat{M}^{-1})$, respectively. Considering the sufficient condition $	0 \notin \mathcal{F}(A\tilde{M}^{-1}) ~\cap~  \mathcal{F}(A\hat{M}^{-1})$, without loss of generality, we may assume that $0 \notin \mathcal{F}(A\tilde{M}^{-1})$ which ensures that $\tilde{\xi}\ne 0$. \fb{By substituting $\tilde{L}_k$ from \eqref{Lth} in \eqref{lk}, we can observe that}  the value of $\mathcal{L}_k$ in the proof of previous theorem is bounded above by $\tilde{\mathcal{L}}_k $ given as follows:
			\begin{equation*}
			\tilde{\mathcal{L}}_k := \sqrt {1 - \frac{{{{\tilde \xi}^2}}}{{\| {A\tilde M^{ - 1}} \|^2}}\cdot \frac{\|r^{(k)}\|^2}{\|r^{(k)}\|^2+\|r^{(k)}-r^{(k-1)}\|^2}}.
			\end{equation*}
From \eqref{lk}, it follows that 	$\mathcal{L}_k \le \tilde{\mathcal{L}}_k$. \fb{By the Cauchy--Schwarz inequality, we have $$- \left\langle {{r^{(k)}},{r^{(k - 1)}}} \right\rangle  \le \left\| {{r^{(k)}}} \right\|\left\| {{r^{(k - 1)}}} \right\|$$ which implies 
\[
\|r^{(k)}-r^{(k-1)}\|^2 \le (\|r^{(k)}\|+\|r^{(k-1)}\|)^2.
\]
}
Therefore, in view of the above remark, we have
					\begin{equation*}
						\mathcal{L}_k \le \sqrt {1 - \frac{{{{\tilde \xi}^2}}}{{\| {A\tilde M^{ - 1}} \|^2}}\cdot \frac{\|r^{(k)}\|^2}{\|r^{(k)}\|^2+(\|r^{(k)}\|+\|r^{(k-1)}\|)^2}}
				\end{equation*}
$\tilde{\xi}\ne 0$. For  ease of notation, we define
		\[
		\tilde{\mathcal{C}} _{k}:= { \frac{{{{\tilde \xi}^2}}}{{\| {A\tilde M^{ - 1}} \|^2}}\cdot \frac{\|r^{(k)}\|^2}{\|r^{(k)}\|^2+(\|r^{(k)}\|+\|r^{(k-1)}\|)^2}}
		\]
		and
			\[\tilde{\mathcal{C}} 	:= {\frac{{{{\tilde \xi}^2}}}{{5\| {A\tilde M^{ - 1}} \|^2}}}.\]
		By Theorem \ref{thm2.3}, the sequence of residual norms $\|r^{(k)}\|$ is convergent. Let 	
	$\mathop {\lim }\limits_{k \to \infty } \|r^{(k)}\| =\tau$.
		This shows 
				\[\mathop {\lim }\limits_{k \to \infty } \tilde{\mathcal{C}} _k = 	\tilde{\mathcal{C}}.  \]		
			Using the	Cauchy--Schwarz inequality and the definition of $\tilde{\xi}$, one can observe $ \tilde{\xi} \le \| {A\tilde M^{ - 1}} \|$ which implies $ \tilde{\mathcal{C}} < 1$. The assumption that $\tilde{\xi} \ne 0$ implies  $\tilde{\mathcal{C}}>0$.
				It can be verified that
					\[
				\| r^{(k+1)} \| \le { \mathcal{L}_{k}} \|r^{(k)}\|\le (1-\tilde{\mathcal{C}} _{k})^{1/2}\|r^{(k)}\|.
				\]		  
	Letting $k \to \infty$ in  the above inequalities, we get 
	\[
	\tau \le (1-\tilde{\mathcal{C}})^{1/2} \tau 
	\] 
	which implies $\tau=0$.
	Hence, we have proved the following result.	}

\end{rem}
	\begin{thm}\label{thm2.X}
	Under the assumptions of Theorem \ref{thm2.3}, the method \eqref{method} is convergent.
	\end{thm}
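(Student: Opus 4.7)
The plan is to formalize the argument sketched in Remark \ref{rem2.6} into a clean proof. The setting is that Theorem \ref{thm2.3} already shows $\|r^{(k+1)}\| \le \mathcal{L}_k \|r^{(k)}\|$ with $\mathcal{L}_k < 1$, so $\{\|r^{(k)}\|\}$ is a monotone decreasing sequence of nonnegative numbers and hence converges to some limit $\tau \ge 0$. The whole task is to rule out $\tau > 0$.

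First I would invoke the hypothesis \eqref{eq11}, which says $0 \notin \mathcal{F}(A\tilde M^{-1}) \cap \mathcal{F}(A\hat M^{-1})$, and choose (without loss of generality) the factor for which the FoV stays away from the origin. Say $0 \notin \mathcal{F}(A\tilde M^{-1})$, so $\tilde\xi > 0$. Then from the computation preceding \eqref{lk} I would use the weaker estimate
\[
\mathcal{L}_k \le \tilde{\mathcal{L}}_k \le \sqrt{1 - \tilde{\mathcal{C}}_k}\,,
\]
where $\tilde{\mathcal{C}}_k$ is defined as in Remark \ref{rem2.6} with the denominator inflated by the triangle inequality $\|r^{(k)} - r^{(k-1)}\| \le \|r^{(k)}\| + \|r^{(k-1)}\|$. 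The reason for inflating the denominator is precisely the main technical point: the raw quantity $\|r^{(k)} - r^{(k-1)}\|$ is not obviously controlled by $\|r^{(k)}\|$ alone in the limit, but the triangle inequality bound depends only on consecutive residual norms, which both tend to $\tau$.

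Next I would pass to the limit $k \to \infty$. Since $\|r^{(k)}\|, \|r^{(k-1)}\| \to \tau$, the ratio inside $\tilde{\mathcal{C}}_k$ tends to $\tau^2 / (\tau^2 + (2\tau)^2) = 1/5$ whenever $\tau > 0$, giving
\[
\lim_{k\to\infty} \tilde{\mathcal{C}}_k \;=\; \tilde{\mathcal{C}} \;=\; \frac{\tilde\xi^2}{5\,\|A\tilde M^{-1}\|^2}.
\]
By Cauchy--Schwarz $\tilde\xi \le \|A\tilde M^{-1}\|$, so $0 < \tilde{\mathcal{C}} < 1$ (positivity uses $\tilde\xi > 0$). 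Taking limits in the one-step contraction $\|r^{(k+1)}\| \le (1-\tilde{\mathcal{C}}_k)^{1/2}\|r^{(k)}\|$ then yields $\tau \le (1-\tilde{\mathcal{C}})^{1/2}\tau$ with a contraction factor strictly less than $1$, which forces $\tau = 0$.

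Finally, I would observe that the case where the break-down scenario of Proposition \ref{prop2.1} or Proposition \ref{prop2.12} occurs at some finite step is already covered by Remark \ref{rem2.5}: the method then terminates with the exact solution. Hence in every case $\|r^{(k)}\| \to 0$, and since $A$ is nonsingular, $x^{(k)} \to A^{-1}b$. The main (and only) delicate point is controlling $\|r^{(k)} - r^{(k-1)}\|$ in the denominator of the contraction factor; using the triangle inequality to replace it by $\|r^{(k)}\| + \|r^{(k-1)}\|$ is what makes the limiting argument work cleanly, because both terms then share the same limit $\tau$ as $k \to \infty$.
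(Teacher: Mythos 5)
Your proposal is correct and follows essentially the same route as the paper, which proves Theorem \ref{thm2.X} by combining the one-step contraction of Theorem \ref{thm2.3} with exactly the limiting argument of Remark \ref{rem2.6} (triangle-inequality inflation of the denominator, the limit $\tilde{\mathcal{C}} = \tilde\xi^2/(5\|A\tilde M^{-1}\|^2)$, and $\tau \le (1-\tilde{\mathcal{C}})^{1/2}\tau$). Your explicit restriction of the limit computation to the case $\tau>0$ is in fact a slight tightening of the paper's presentation, which passes to the limit without separating that case.
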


	\begin{rem}\label{rem2.4}{\rm
			As seen, the assumption \eqref{eq11} guarantees the convergence of the sequence of approximate solutions produced by \eqref{method}. Otherwise, we may have
			\begin{equation*}
				\left\langle {{r^{(k)}},A{\delta_1 ^{(k)}}} \right\rangle  = 0 \quad \text{and}\quad 	\left\langle {{r^{(k + \frac{1}{2})}},A{\delta_1 ^{(k + \frac{1}{2})}}} \right\rangle  = 0
			\end{equation*}
			for nonzero vectors $\delta_1^{(k)}$ and $\delta_1^{(k + \frac{1}{2})}$. Note that $A\tilde{M}^{-1}=(\tilde{M}-\tilde{N})\tilde{M}^{-1}=I-\tilde{N}\tilde{M}^{-1}$ and $A\hat{M}^{-1}=(\hat{M}-\hat{N})\hat{M}^{-1}=I-\hat{N}\hat{M}^{-1}$. Hence, the above relations are respectively equivalent to
			\[
			\frac{{\left\langle {{r^{(k)}},\tilde N{{\tilde M}^{ - 1}}r^{(k)}} \right\rangle }}{{\left\langle {{r^{(k)}},{r^{(k)}}} \right\rangle }} = 1\]
			and
			\[
			\frac{{\left\langle {{r^{(k + \frac{1}{2})}},\hat N{{\hat M}^{ - 1}}r^{(k + \frac{1}{2})}} \right\rangle }}{{\left\langle {{r^{(k + \frac{1}{2})}},{r^{(k + \frac{1}{2})}}} \right\rangle }} =1.
			\]
			Consequently, the assumption \eqref{eq11} is equivalent to
			$$	1 \notin \mathcal{F}(\tilde N{{\tilde M}^{ - 1}}) ~\cap~  \mathcal{F}(\hat N{{\hat M}^{ - 1}}).$$
			The above condition holds, if  either
			$\| \tilde{M}^{-1}\tilde{N}\|<1$ or $\| \hat{M}^{-1}\hat{N}\|<1$.}
	\end{rem}

	We conclude this section by commenting that no explicit formula is available for determining the optimum value of the parameter $\alpha$ in the MRHSS method.
	The best value of $\alpha$ is problem-dependent and is usually determined experimentally, limiting the effectiveness of the method, see the numerical experiments in \cite{Yang2,Yang}. In contrast, our implementation of the TSTMR method does not need any free parameters, see Subsection \ref{sub4.1} for more details.

	\section{TSTMR for discrete ill-posed problems}\label{sec3}
	
	In this section, we apply the proposed method 
	to find approximations to the (least-squares) solutions of  linear systems of equations
	\begin{equation}\label{eq1}
		Af=g\,,
	\end{equation}
	where $A\in \mathbb{R}^{m\times n}$, with no restrictions on $m$ and $n$. Such systems  may arise from the discretization of ill-posed problems. Examples include the discretization of inverse problems, such as  image restoration problems, and 
	Fredholm integral equations of the first kind, see \cite{Cui,Hansen,Lv} and the reference therein for more details. In these applications, the right-hand side is typically contaminated by an error (or noise) vector
	$e\in \mathbb{R}^{n}$, i.e., $g=\tilde{g}+e$
	where the vector $\tilde{g}$ represents the unknown, noise-free right-hand side, and the goal 
	  is to find acceptable approximations to the (inaccessible) solution
	of the 
	linear system of equations (or least-squares problem)
$$
		Af=\tilde{g}.
$$
To deal with the ill-posed nature of the problem, a common strategy is to use
	 Tikhonov regularization, which consists of replacing the original problem by the following minimization problem:
	\begin{equation}\label{eq1t}
		\mathop {\min }\limits_f \left\{ {\left\| {Af - g} \right\|^2 + {\mu ^2}\left\|
			Lf \right\|^2} \right\} \,.
	\end{equation}
Here $L$ is the regularization matrix, which is typically chosen to
	be either the identity matrix or a discrete approximation of the derivative operator. In addition,
	the nonnegative constant $\mu$ is the regularization parameter, which is generally small (relative to the data).
	Throughout this paper, we only consider the case that  $L=I$.
	
	In the sequel, the minimization problem \eqref{eq1t} is first reformulated into a linear system of equations and 
	some of the possible solution methods are reviewed. 
	Then, the proposed TSTMR method is adapted to solve a two-by-two augmented block linear system of equations associated with Eq. \eqref{eq1t}.
	
	\subsection{Problem reformulation}
	
	It is well-known that the regularized problem \eqref{eq1t} (with $L=I$) is mathematically equivalent to the following system of (regularized) normal equations:
	\begin{equation}\label{eq1n}
		(A^TA+\mu^2I)f=A^Tg.
	\end{equation}

	Evidently, Eq. \eqref{eq1n}  is equivalent to the following block linear system (e.g., see \cite{Lv})
	\begin{equation}\label{eq20}
		\left[ {\begin{array}{*{20}{c}}
				I&A\\
				{ - {A^T}}&{{\mu ^2}I}
		\end{array}} \right]\left[ {\begin{array}{*{20}{c}}
				e\\
				f
		\end{array}} \right] = \left[ {\begin{array}{*{20}{c}}
				g\\
				0
		\end{array}} \right]	
	\end{equation}
	where $e=g-Af$. In the sequel, for 
	notational simplicity, we set\footnote[2]{We emphasize that the block matrix $K$ in \eqref{Kmatrix} is not  explicitly  formed in practice.}
	\begin{equation}\label{Kmatrix}
		K = \left[ {\begin{array}{*{20}{c}}
				I&A\\
				{ - {A^T}}&{{\mu ^2}I}
		\end{array}} \right], \quad x=\left[ {\begin{array}{*{20}{c}}
				e\\
				f
		\end{array}} \right], \quad b=\left[ {\begin{array}{*{20}{c}}
				g\\
				0
		\end{array}} \right].
	\end{equation}
	
The Hermitian and skew-Hermitian splitting  of $K$ takes the following form:
	\begin{equation}\label{HSS}
		K = \mathcal{H}(K) + \mathcal{S}(K) = \left[ {\begin{array}{*{20}{c}}
				I&0\\
				0&{{\mu ^2}I}
		\end{array}} \right] + \left[ {\begin{array}{*{20}{c}}
				0&A\\
				{ - {A^T}}&0
		\end{array}} \right].
	\end{equation}
	It is immediate to see that $\mathcal{H}(K) \succ 0$ and one can apply the HSS method;  the reader is referred to \cite{Bai,Benzi2004,Benzi2006} for more details.
	Lv et al. \cite{Lv} proposed
	a special HSS (SHSS) iterative method by substituting $\alpha= 1$ into the second step of the HSS method. More precisely, the SHSS iterations produce the approximate solutions to  \eqref{eq20} as follows:
	\begin{equation*}
		\begin{cases}
			(\alpha I + \mathcal{H}(K) ) x^{(k+\frac{1}{2})}=(\alpha I -\mathcal{S}(K) ) x^{(k)}+b\\
			( I +\mathcal{S}(K) ) x^{(k+1)} = ( I - \mathcal{H}(K) ) x^{(k+\frac{1}{2})}+b
		\end{cases}\quad 	(k=0,1,2,\ldots)
	\end{equation*}
	for a given initial  guess $x^{(0)}=[e^{(0)};f^{(0)}]$ and $\alpha > 0$.
	In order to further improve the performance of the SHSS method,  Cui et al. \cite{Cui}
	established the modified SHSS  (MSHSS) method. The  MSHSS method constructs  approximate solutions of \eqref{eq20} using the following two steps:
	\begin{equation}\label{eq23}
		\begin{cases}
			(\alpha I + \mathcal{H}(K) ) x^{(k+\frac{1}{2})}=(\alpha I -\mathcal{S}(K) ) x^{(k)}+b\\
			( \Omega +\mathcal{S}(K) ) x^{(k+1)} = ( \Omega - \mathcal{H}(K) ) x^{(k+\frac{1}{2})}+b
		\end{cases}\quad 	(k=0,1,2,\ldots)
	\end{equation}
	where $
	\Omega  = \left[ {\begin{array}{*{20}{c}}
			I&0\\
			0&{\gamma I}
	\end{array}} \right]$ with prescribed  $\gamma >0 $ ($\gamma \ne \mu^2$), here the initial guess $x^{(0)}=[e^{(0)};f^{(0)}]$ and $\alpha > 0$ are given.
	
	It has been observed that the SHSS method outperforms the standard HSS method for solving $Kx=b$ where the matrix $K$ is given by \eqref{Kmatrix}, see \cite{Cui,Lv}  for further details.
	In addition, the MSHSS method is superior to the SHSS method according to the numerical experiments reported in \cite{Cui}. Therefore, in Example \ref{ex2} below, we only 
	show the results comparing the proposed TSTMR approach with the MSHSS method.
	
	\subsection{TSTMR for solving the regularized problem}\label{sub3.1}
	
	It is immediate to observe that the first and second steps of iterative method \eqref{eq23} correspond to the
	following splitting of the matrix $K$, respectively,
	\begin{equation}\label{eq24}
		K  =(\alpha I + \mathcal{H}(K) ) - \left[ {\begin{array}{*{20}{c}}
				\alpha I &-A\\
				A^T &{\alpha I}
		\end{array}} \right]\quad
		\text{and} \quad
		K =( \Omega +\mathcal{S}(K)) - \left[ {\begin{array}{*{20}{c}}
				0&0\\
				0&{\epsilon I}
		\end{array}} \right]\end{equation}
	where $\epsilon:=(\gamma  - {\mu ^2})$.
	In this subsection, we apply the TSTMR approach to solve the linear system of equations \eqref{eq20}.
	To this end, we set $\alpha=0$, choose a suitable value for $\gamma$ and  apply the TSTMR method in conjunction with the   splittings \eqref{eq24}. 
	The appropriate value of $\gamma$ is determined such that the
	following condition holds:
	\begin{equation}\label{eq112}
		0 \notin \mathcal{F}(K\mathcal{H}(K)^{-1}) ~\cap~  \mathcal{F}(K(\Omega +\mathcal{S}(K))^{-1}),
	\end{equation}
	which guarantees the convergence of the corresponding TSTMR method by Theorem \ref{thm2.X}. To do so, we first need to present the following proposition.  
	
	\begin{prop}\label{prop3.1}
		Let $S=\gamma I + A^TA$ for a given positive constant $\gamma$. Then $\|S^{-1}A^T\| \le \frac{1}{2\sqrt{\gamma}}$.
	\end{prop}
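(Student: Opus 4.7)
The plan is to reduce the matrix-norm bound to a one-variable optimization, which can be done in two essentially equivalent ways: via the SVD of $A$, or via a direct energy-type argument. Both avoid heavy computation; the key quantitative input is the elementary inequality $\sigma/(\gamma + \sigma^2) \le 1/(2\sqrt{\gamma})$ on $\sigma \ge 0$, attained at $\sigma = \sqrt{\gamma}$.

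The direct route I would take goes as follows. Fix an arbitrary unit vector $x$ and set $y := S^{-1}A^T x$, so that $Sy = A^T x$, i.e., $\gamma y + A^T A y = A^T x$. Taking the inner product with $y$ produces
\[
\gamma \|y\|^2 + \|Ay\|^2 \;=\; \langle A^T x, y\rangle \;=\; \langle x, Ay\rangle \;\le\; \|Ay\|,
\]
by the Cauchy--Schwarz inequality and $\|x\|=1$. An application of the AM--GM inequality to the left-hand side gives $\gamma \|y\|^2 + \|Ay\|^2 \ge 2\sqrt{\gamma}\,\|y\|\,\|Ay\|$. Combining these two bounds and cancelling $\|Ay\|$ yields $\|y\| \le 1/(2\sqrt{\gamma})$, and taking the supremum over unit $x$ gives the claim.

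A fully equivalent alternative is to diagonalize: with the SVD $A = U\Sigma V^T$, one finds $S = V(\gamma I + \Sigma^T\Sigma)V^T$ and therefore $S^{-1}A^T = V(\gamma I + \Sigma^T\Sigma)^{-1}\Sigma^T U^T$, whose spectral norm equals $\max_i \sigma_i/(\gamma + \sigma_i^2)$. Elementary calculus on $f(\sigma) = \sigma/(\gamma + \sigma^2)$ gives the maximum $1/(2\sqrt{\gamma})$, attained at $\sigma = \sqrt{\gamma}$, so the bound follows.

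The only minor technicality is the degenerate case $Ay = 0$ in the direct argument, where one cannot cancel $\|Ay\|$. In that event the displayed equality forces $\gamma\|y\|^2 = \langle x, Ay\rangle = 0$, hence $y = 0$, and the target inequality is trivial. No further obstacle is expected, since the whole argument rests on two one-line inequalities (Cauchy--Schwarz and AM--GM) applied to the defining identity $Sy = A^T x$.
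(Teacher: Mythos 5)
Your proposal is correct. Your second ("fully equivalent alternative") argument is essentially the paper's own proof: the paper diagonalizes $A^TA$, observes that $S^{-1}(A^TA)S^{-1}$ has eigenvalues $\lambda/(\gamma+\lambda)^2$ for $\lambda\in\sigma(A^TA)$, and bounds $\sqrt{\lambda}/(\gamma+\lambda)$ by its maximum $1/(2\sqrt{\gamma})$ over $\lambda\ge 0$ via one-variable calculus; this is the same computation as your $\max_i \sigma_i/(\gamma+\sigma_i^2)$ in the SVD. Your primary route, however, is genuinely different and worth noting: it never decomposes $A$, but instead tests the defining identity $Sy=A^Tx$ against $y$ and combines Cauchy--Schwarz with AM--GM, the latter silently encoding the same scalar inequality $2\sqrt{\gamma}\,ab\le \gamma a^2+b^2$ that calculus extracts from $\sigma/(\gamma+\sigma^2)$. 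What the spectral route buys is an exact expression for the norm (so one sees that the bound is attained iff $\gamma\in\sigma(A^TA)$); what your energy argument buys is independence from any eigendecomposition, so it extends verbatim to operators on infinite-dimensional spaces or to any setting where $S=\gamma I+A^*A$ with $A$ a bounded operator. You also correctly dispose of the only degenerate case ($Ay=0$ forces $y=0$), so there is no gap.
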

	
	\begin{proof}
		Let $\lambda$ be an arbitrary eigenvalue of $A^TA$, i.e., there exists a nonzero vector $w$ such that
		\[
		A^TA w =\lambda w.
		\]	
		It is obvious that $S^{-1}w=(\gamma I+ A^TA)^{-1}  w =(\gamma+\lambda)^{-1} w.$ Now, it is not difficult to verify that
		\[
		S^{-1}(A^TA)S^{-1} w = \frac{\lambda}{(\gamma+\lambda)^2}w.
		\]
		Evidently, we have
		\[
		\mathop {\max }\limits_{\lambda  \in \sigma ({A^T}A)} \frac{{\sqrt \lambda  }}{{{{(\gamma  + \lambda )}^{}}}} \le \mathop {\max }\limits_{\lambda  \ge 0} \frac{{\sqrt \lambda  }}{{{{(\gamma  + \lambda )}^{}}}}.
		\]
		It can be seen that the function
		$g(x) := \frac{{\sqrt x }}{{(\gamma  + x)}}$ for $x\ge 0$ takes its maximum on $x=\gamma$.	Therefore, we have
		\[
		\|S^{-1}A^T\| =\sqrt{\rho(S^{-1}A^T(S^{-1}A^T)^T)} \le g(\gamma)
		\]
		which completes the proof.
	\end{proof}
	
	Now we establish a theorem from which we can conclude that the condition
	$$
	0 \notin \mathcal{F}(K(\Omega +\mathcal{S}(K))^{-1})
	$$
	holds for certain values of $\gamma$. Obviously, the above condition implies
	\eqref{eq112}.

	\begin{thm}\label{th3.3}
		
		Let the parameter $\gamma$ be chosen such that $\gamma > \mu^2$ where $\mu$ is the given nonnegative parameter in \eqref{eq1n}. Then the real and imaginary parts of $\mathcal{F}(K(\Omega +\mathcal{S}(K))^{-1}) $  satisfy
		\begin{eqnarray}
			\mathcal{R}(K(\Omega +\mathcal{S}(K))^{-1})  &\subset& \left({\frac{{{\mu ^2} + {\lambda _{\min }}({A^T}A)}}{{\gamma  + {\lambda _{\min }}({A^T}A)}} - \bar \eta ,1 + \bar \eta} \right)\label{eq26n} \\
			\mathcal{I}(K(\Omega +\mathcal{S}(K))^{-1}) &\subset& \left(- \frac{1}{2\sqrt{\gamma}} , \frac{1}{2\sqrt{\gamma}} \right)	\label{eq26nn}
		\end{eqnarray}
		where 
		\begin{equation}\label{eq25}
			\mathcal{R}(K(\Omega +\mathcal{S}(K))^{-1}):=		\left\{  \text{\rm Re} (z) ~ | ~ z =\frac{\left\langle {K(\Omega +\mathcal{S}(K))^{-1}y,y} \right\rangle}{\left\langle {y,y} \right\rangle} ~\text{for}~  0\ne y\in \mathbb{C}^{n}  \right\},
		\end{equation}
		\begin{equation}\label{eq25i}
			\mathcal{I}(K(\Omega +\mathcal{S}(K))^{-1}):=		\left\{  {\rm Im} (z) ~ | ~ z =\frac{\left\langle {K(\Omega +\mathcal{S}(K))^{-1}y,y} \right\rangle}{\left\langle {y,y} \right\rangle} ~\text{for}~  0\ne y\in \mathbb{C}^{n}  \right\},
		\end{equation}
		and $\bar \eta = \frac{{\gamma  - {\mu ^2}}}{{2\sqrt \gamma  }}.$ 
	\end{thm}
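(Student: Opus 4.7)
The plan is to reduce everything to an explicit block formula for $K(\Omega+\mathcal{S}(K))^{-1}$ and then extract real and imaginary parts of the associated Rayleigh quotient, bounding the cross-term with Proposition \ref{prop3.1}. First, note that by \eqref{eq24},
\[
K = (\Omega+\mathcal{S}(K)) - \begin{bmatrix} 0 & 0 \\ 0 & \epsilon I \end{bmatrix}, \qquad \epsilon = \gamma-\mu^2 > 0,
\]
and invert $\Omega+\mathcal{S}(K)=\bigl[\begin{smallmatrix} I & A \\ -A^T & \gamma I \end{smallmatrix}\bigr]$ in closed form using the Schur complement $S := \gamma I + A^T A \succ 0$; a routine calculation yields
\[
(\Omega+\mathcal{S}(K))^{-1} = \begin{bmatrix} I - A S^{-1} A^T & -A S^{-1} \\ S^{-1} A^T & S^{-1} \end{bmatrix},
\qquad
K(\Omega+\mathcal{S}(K))^{-1} = \begin{bmatrix} I & 0 \\ -\epsilon S^{-1} A^T & I - \epsilon S^{-1} \end{bmatrix}.
\]

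The second step is to compute, for an arbitrary nonzero $y = [u;v]\in\mathbb{C}^{m+n}$, the quantity
\[
z \;=\; \frac{\langle K(\Omega+\mathcal{S}(K))^{-1} y, y\rangle}{\langle y,y\rangle} \;=\; 1 - \epsilon\,\frac{u^* A S^{-1} v \;+\; v^* S^{-1} v}{\|u\|^2 + \|v\|^2}.
\]
Since $S^{-1}$ is real symmetric positive definite, $v^* S^{-1} v$ is real and nonnegative, so the imaginary part of $z$ comes entirely from the cross term $u^* A S^{-1} v$, while the real part involves both contributions.

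For the imaginary inclusion \eqref{eq26nn}, apply Cauchy--Schwarz together with Proposition \ref{prop3.1} (which gives $\|AS^{-1}\| = \|S^{-1}A^T\| \le 1/(2\sqrt{\gamma})$) to bound $|u^* A S^{-1} v| \le \|u\|\,\|v\|/(2\sqrt{\gamma})$, then use the AM--GM inequality $\|u\|\|v\| \le \tfrac12(\|u\|^2+\|v\|^2)$ to obtain
\[
|\mathrm{Im}(z)| \;\le\; \frac{\epsilon}{4\sqrt{\gamma}} \;\le\; \frac{1}{2\sqrt{\gamma}},
\]
which yields \eqref{eq26nn}. For the real inclusion \eqref{eq26n}, treat the two contributions separately: the cross term is bounded, as above, by $|u^* A S^{-1} v|/(\|u\|^2+\|v\|^2) \le 1/(4\sqrt{\gamma})$ giving a contribution of at most $\bar\eta$ in absolute value, while the diagonal term $v^* S^{-1} v/(\|u\|^2+\|v\|^2)$ lies in $[0, 1/(\gamma+\lambda_{\min}(A^T A))]$ because the spectrum of $S^{-1}$ is $\{(\gamma+\lambda)^{-1} : \lambda\in\sigma(A^T A)\}$ and $\|v\|^2/(\|u\|^2+\|v\|^2)\le 1$. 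Combining the extreme sign choices and using $1 - \epsilon/(\gamma+\lambda_{\min}(A^T A)) = (\mu^2+\lambda_{\min}(A^T A))/(\gamma+\lambda_{\min}(A^T A))$ gives the two-sided bound \eqref{eq26n}.

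The main technical obstacle is the real-part estimate, where one must combine the two contributions (cross term and diagonal $v$-term) without double counting: naively maximizing each over $y$ yields a bound, but the two maxima are attained at different configurations of $y$, so one has to settle for the (loose but sufficient) sum bound in order to match the form stated in \eqref{eq26n}. The rest of the argument consists of careful but routine applications of Cauchy--Schwarz, AM--GM, and the spectral decomposition of $S$.
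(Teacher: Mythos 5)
Your proof is correct and takes essentially the same route as the paper's: the same explicit block form $K(\Omega+\mathcal{S}(K))^{-1}=I-\epsilon\left[\begin{smallmatrix}0&0\\S^{-1}A^T&S^{-1}\end{smallmatrix}\right]$, the same split of the Rayleigh quotient into a nonnegative diagonal term bounded spectrally by $1/(\gamma+\lambda_{\min}(A^TA))$ and a cross term controlled by Proposition~\ref{prop3.1}. Your AM--GM bound $|u^*AS^{-1}v|/(\|u\|^2+\|v\|^2)\le 1/(4\sqrt{\gamma})$ is in fact a factor-of-two improvement over the paper's (which instead normalizes via a not-quite-symmetric ``without loss of generality $\|x_2\|\le\|x_1\|$'' and only reaches $1/(2\sqrt{\gamma})$); note only that your concluding step $\epsilon/(4\sqrt{\gamma})\le 1/(2\sqrt{\gamma})$ for the imaginary part tacitly assumes $\gamma-\mu^2\le 2$, which is the same (indeed a milder) unstated restriction that the paper's own derivation of \eqref{eq26nn} requires, the bound naturally produced by both arguments being a multiple of $\bar\eta$ rather than $1/(2\sqrt{\gamma})$.
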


	\begin{proof}
		For simplicity, we set $\epsilon=(\gamma  - {\mu ^2})$.	Considering \eqref{eq24} and using straightforward computations, one can derive
		\begin{equation*}
			K(\Omega +\mathcal{S}(K))^{-1}  =I-\epsilon \left[ {\begin{array}{*{20}{c}}
					0 &0\\
					S^{-1}A^T &{S^{-1}}
			\end{array}} \right]
		\end{equation*}
		where $S=\gamma I + A^TA$. For an arbitrary nonzero vector $x=[x_1;x_2]$, we have
		\begin{align}
			\left\langle {K(\Omega +\mathcal{S}(K))^{-1}x,x} \right\rangle & = \left\langle {x,x} \right\rangle  -\epsilon  (x_2^*S^{-1}A^Tx_1+x_2^*S^{-1}x_2).\label{eq26}
		\end{align}
		From  the above relation, it turns out that
		\begin{equation}\label{eq33}
			\frac{\left\langle {K(\Omega +\mathcal{S}(K))^{-1}x,x} \right\rangle}{\left\langle {x,x} \right\rangle}  =   1 -\epsilon\,  \frac{x_2^*S^{-1}A^Tx_1+x_2^*S^{-1}x_2}{\left\langle {x,x} \right\rangle}.
		\end{equation}
		
		If $x_1$ is a zero vector, then
		\[
		\frac{\left\langle {K(\Omega +\mathcal{S}(K))^{-1}x,x} \right\rangle}{\left\langle {x,x} \right\rangle}  =   1 -\epsilon\,  \frac{x_2^*S^{-1}x_2}{\left\langle {x,x} \right\rangle}.
		\]
		Therefore, the right-hand side is real and bounded as follows:
		\begin{eqnarray*}
			\frac{{{\mu ^2} + {\lambda _{\min }}({A^T}A)}}{{\gamma  + {\lambda _{\min }}({A^T}A)}}= 1- \frac{\epsilon}{\gamma + \lambda_{\min} (A^TA)} &\le&  1 -\epsilon \,	\frac{x_2^*S^{-1}x_2}{\left\langle {x_2,x_2} \right\rangle}\\
			& =& \frac{\left\langle {K(\Omega +\mathcal{S}(K))^{-1}x,x} \right\rangle}{\left\langle {x,x} \right\rangle} \le 1.
		\end{eqnarray*}
		When $x_2$ is a zero vector, we simply obtain
		\[
		\frac{\left\langle {K(\Omega +\mathcal{S}(K))^{-1}x,x} \right\rangle}{\left\langle {x,x} \right\rangle}  =   1.
		\]
		As a result, if either $x_1$ or $x_2$ is zero, then the value of the left-hand side in \eqref{eq33} is real and bounded by $\frac{{{\mu ^2} + {\lambda _{\min }}({A^T}A)}}{{\gamma  + {\lambda _{\min }}({A^T}A)}} $ (one) from below (above). In the rest of proof, we assume that $x_1$ and $x_2$ are both nonzero vectors. Without loss of generality, we may assume that $0<\|x_i\|<1$ for $i=1,2$. Evidently, we have
		\begin{eqnarray}
			\text{Re}\left(	\frac{\left\langle {K(\Omega +\mathcal{S}(K))^{-1}x,x} \right\rangle}{\left\langle {x,x} \right\rangle}  \right) &=&1 -\epsilon \left(	 \frac{x_2^*S^{-1}x_2}{\left\langle {x,x} \right\rangle}+ 	\frac{\text{Re}\left(x_2^*S^{-1}A^Tx_1 \right) }{\left\langle {x,x} \right\rangle}\right) \label{eq271}\\
			\text{Im}\left(	\frac{\left\langle {K(\Omega +\mathcal{S}(K))^{-1}x,x} \right\rangle}{\left\langle {x,x} \right\rangle}  \right) &=& -\epsilon \, \frac{\text{Im}\left(x_2^*S^{-1}A^Tx_1 \right) }{\left\langle {x,x} \right\rangle}. \label{eq281}
		\end{eqnarray}
		Using the Cauchy--Schwarz inequality and assuming, without loss of generality, that $\|x_2\| \le \|x_1\| < 1$, we conclude
		\[
		\frac{|x_2^*S^{-1}A^Tx_1|}{\left\langle {x,x} \right\rangle} < \frac{|x_2^*S^{-1}A^Tx_1|}{\left\langle {x_1,x_1} \right\rangle} \le \frac{\|S^{-1}A^Tx_1\| }{\|x_1\|}.
		\]
		
		Now, by Proposition \ref{prop3.1}, one can observe that
		\begin{equation*}
			\frac{|x_2^*S^{-1}A^Tx_1|}{\left\langle {x,x} \right\rangle} < \frac{1}{2\sqrt{\gamma}}.
		\end{equation*}
		By the above inequality, we can conclude intervals \eqref{eq26n} and \eqref{eq26nn} associated with \eqref{eq271} and \eqref{eq281}, respectively.
	\end{proof}
	
	\begin{rem}\label{th3.2}
		{\rm	Let the parameters $\mu \ge 0$ and $\gamma>0$ be given. From Theorem \ref{th3.3}, it is immediate to verify that 	$0 \notin \mathcal{F}(K(\Omega +\mathcal{S}(K))^{-1})$ when
			\begin{equation}\label{eq28n}
				0 < (\gamma  - {\mu ^2}) <  \frac{2\sqrt{\gamma}(\mu^2+\lambda_{\min}(A^TA))}{\gamma+\lambda_{\min}(A^TA)}
			\end{equation}
		Indeed,  for any $z\in \mathcal{F}(K(\Omega +\mathcal{S}(K))^{-1}),$ we have ${\rm Re}(z)>0$.}
	\end{rem}
	
	The above remark can be exploited for determining the suitable value of $\gamma$. \fb{One can verify that 
\begin{equation*}	
	\frac{2\mu^2}{\sqrt{\gamma}} <  \frac{2\sqrt{\gamma}(\mu^2+\lambda_{\min}(A^TA))}{\gamma+\lambda_{\min}(A^TA)},
\end{equation*}
provided that $\mu^2 < \gamma$. The preceding bound is sharp since $\lambda_{\min}(A^TA)$ is typically very small in the case of  ill-posed problems. 
Hence, the value of $\gamma$ can be determined such that 
\begin{equation}\label{1}
0<	\gamma  - {\mu ^2} < \frac{2\mu^2}{\sqrt{\gamma}}. 
\end{equation}	
}	
	\fb{By Descartes' rule of signs, the cubic polynomial $t^3-\mu^2 t-2\mu^2=0$ has a unique positive root.
		Hence, we conclude that there exists $\bar{t}>0$ such that
		\[
	\bar{t}(\bar{t}^2-\mu^2)=2\mu^2,
		\]
which ensures that $\bar{t}^2> \mu^2$.
Therefore, setting $\gamma^*=\bar{t}^2$,} one can see that there is an open interval $I=(\mu^2, \gamma^*)$, independent of $\lambda_{\min} (A^TA)$,  such that ${\rm Re}(z)>0$ for all $\gamma \in I$. To be more precise, the constant $\gamma^*$ is the (unique) positive solution of the equation $\sqrt{\gamma} (\gamma - \mu^2) = 2\mu^2$.
	Our experimental results show that even the largest value of $\gamma$ that satisfies \fb{either \eqref{eq28n} or \eqref{1}} leads to feasible performance of the proposed TSTMR method. 
	
	The following last remark on Theorem \ref{th3.3}  provides a more explicit upper bound for the real part of the FoV of
	$K(\Omega +\mathcal{S}(K))^{-1}$. 
	
	\begin{rem}\label{prop3.4}{\rm
			In addition to the assumptions of Theorem \ref{th3.3}, assume that $\gamma <4$.  Let
			$z_1\in 	\mathcal{R}(K(\Omega +\mathcal{S}(K))^{-1}) $ where  $\mathcal{R}(K(\Omega +\mathcal{S}(K))^{-1})$ is defined by \eqref{eq25}. Notice that  $\sqrt{\gamma} <2$ together with Theorem \ref{th3.3} imply that
			\[
			z_1 \le 1 + \frac{{\gamma  - {\mu ^2}}}{{2\sqrt \gamma  }} \le 1 + \frac{{\gamma  - {\mu ^2}}}{{ \gamma  }}<2.
			\]
		}
	\end{rem}

	\section{Numerical experiments}\label{sec4}
	
	In this section, some numerical results are presented to illustrate the feasibility of  the proposed
	TSTMR solver and to compare its performance with some of the existing methods in literature. All of the numerical computations were carried out on a computer with an Intel Core i7-10750H CPU @ 2.60GHz processor and 16.0GB RAM using MATLAB.R2020b.
	
	We report the total required number of iterations and elapsed CPU time (in seconds) under ``Iter" and ``CPU", respectively.  In the  tables, we also include the relative error
	\[
	\mathrm{Err}:=\frac{{\|x^{(k)}-x^\ast\|}}{{\|x^\ast\|}},
	\]
	where $x^\ast$ and $x^{(k)}$ are receptively the exact solution and  its approximation obtained in the $k$th iterate.
	The reported CPU times and iteration counts (rounded to the nearest integer) in the tables are obtained as \fb{the average of ten runs.}

	For more clarification, the following section is divided into three subsections.
	We first report some comparison results between the TSTMR and MRHSS methods for which the splittings in the TSTMR method correspond to the symmetric and shifted skew-symmetric parts of the coefficient matrix on linear systems arising from a finite difference discretization of some convection-diffusion PDEs. Then, the performance of the proposed method is compared with the flexible GMRES (FGMRES) method (in conjunction with a suitable preconditioner) for determining approximate solutions of linear systems of equations arising from finite element discretization of the coupled Stokes-Darcy flow problem.  The second and third parts deal with finding the solution of \eqref{eq1} corresponding to ill-posed test problems in order to numerically illustrate the performance of  the  variant of the TSTMR method  proposed in Subsection \ref{sub3.1}.  Depending on the examined ill-posed test problems, the  method is compared with the methods proposed in \cite{Chung} or \cite{Cui}.

	\subsection{Experimental results for two well-posed test problems}\label{sub4.1}
	
	In this part, we first consider a test example  from \cite[Example 1]{Yang} in order to compare the performance of TSTMR with MRHSS. In the second example, the proposed method is used for determining an approximate solution of a 3D coupled Stokes-Darcy problem with large jumps in the permeability \cite{Chid} and its performance is compared with FGMRES  used in conjunction with an efficient preconditioner proposed in \cite{BB2022}.

	 For the reported experiments in this subsection,  we terminated the iterations once
   \begin{eqnarray}\label{cer}
  \frac{{\|{b-Ax^{(k)}}\|}}{{\|b\|}}\leq 10^{-8},
   \end{eqnarray}
	or if $k\ge 10000$, where $x^{(k)}$ is the $k$th approximate solution and the initial vector $x^{(0)}$ is taken to be
	zero. The right-hand side $b$ in \eqref{main} corresponds to a random solution vector $x^\ast=\text{rand}(n,1)$.

	\begin{example}\label{eq01}
		{\rm 	Consider the following two-dimensional convection-diffusion equation
			\begin{align}\label{eqcd}
				-\left(\frac{\partial^2u}{\partial x^2}  +\frac{\partial^2u}{\partial y^2}  \right) + a(x,y)\frac{\partial u}{\partial x}+ b(x,y)\frac{\partial u}{\partial y}=&f(x,y),\quad \mathrm{in}\, \Omega,\\
				&u=0,\quad  \mathrm{on}\, \partial\Omega,
			\end{align}
			where $\Omega= [0, 1] \times [0, 1]$. 
			The coefficient functions $a(x, y)$ and $b(x, y)$ are chosen as follows:
			\begin{align*}
				&\mathrm{Case\,I}.\quad a(x,y)= x\sin(x+y),\quad b(x,y)= y\cos(xy);\\
				&\mathrm{Case\,II}.\quad a(x,y)=5  y\exp(xy),\quad b(x,y)=5  x\exp(x+y).
			\end{align*}
			We discretize equation \eqref{eqcd} by using the standard finite-point central difference discretization
			with mesh size $h=1/l$ for different values of $l$ and obtain the linear systems $Ax = b$, where
			$A\in \mathbb{R}^{(l-1)^2\times (l-1)^2}$ where $A$ is a  non-symmetric positive definite matrix.

	\begin{table}[h]
	\centering
	\caption	{Example \ref{eq01}: Numerical results for MRHSS and TSTMR methods}{
		{	
			{	\begin{tabular}{cllllll}
					\toprule&&Case I&&&Case II&\\
					\cmidrule{3-4}\cmidrule{6-7}
					Method	&$l$&$80$&$160$&&$80$&$160$\\
					\midrule
					\multirow{15}[2]{*}{ MRHSS}&$\alpha_1$&0.1551&0.0771&&0.1378&0.0685\\
					&Iter&384& 701&&223&397\\
					&CPU &1.85&19.6&&1.33& 15.6\\
					&Err&$7.0651{\rm e}{-06}$&$ 2.8464{\rm e}{-05}$&&$4.3216{\rm e}{-06}$&$1.7699{\rm e}{-05}$\\
					\cmidrule{2-7}
					&$\alpha_2$&$8.5775{\rm e}{-06}$&$2.1409{\rm e}{-06}$&&$2.2865{\rm e}{-03}$&$5.7392{\rm e}{-04}$\\
					&Iter&5&5&&50&46\\
					&CPU&0.04& 0.16&&0.38&1.71\\
					&Err&$6.2753{\rm e}{-06}$&$3.1246{\rm e}{-06}$&&$5.3823{\rm e}{-07}$&$2.1214{\rm e}{-06}$\\
					\cmidrule{2-7}
					&$\alpha_3$&0.0287&0.0142&&0.0293&0.0143\\
					&Iter&80&128&&52&86\\
					&CPU &0.38& 3.54&&1.29&16.4\\
					&Err&$6.4128{\rm e}{-06}$&$2.7094{\rm e}{-05}$&&$2.9773{\rm e}{-06}$&$1.5767{\rm e}{-05}$\\
					\cmidrule{2-7}
					&$\alpha_{\rm exp}$&0.0002&0.0001&&0.009&0.003\\
					&Iter&5&5&&38&35\\
					&CPU &0.04&0.17&&0.32&1.55\\
					&Err&$2.0237{\rm e}{-07}$&$9.4317{\rm e}{-07}$&&$6.5348{\rm e}{-07}$&$5.1019{\rm e}{-06}$\\
					\midrule
					TSTMR&Iter&5&4&&27&24\\
					&CPU  &0.03&0.11&&0.13&0.70\\
					&Err&$7.0589{\rm e}{-08}$&$4.4946{\rm e}{-06}$&&$4.9233{\rm e}{-07}$&$2.3804{\rm e}{-06}$\\
					\bottomrule
	\end{tabular}}}}
	\label{tab1}%
\end{table}%
					
	 To compare the performances of the TSTMR and MRHSS iterative methods for solving linear system \eqref{main}, we set  $\tilde{M}=\mathcal{H}(A)$ and $\hat{M}=\mathcal{S}(A)+\eta^* I$ in the implementation of  TSTMR method where $\eta^*$ is computed by \eqref{eta}. In this case, we have $\| \hat{M}^{-1}\hat{N}\|<1$ by Proposition \ref{prop2.5} and  by Remark \ref{rem2.4}, the TSTMR method converges to the unique solution of the linear system. Unlike the MRHSS method, TSTMR does not face the difficulty of choosing appropriate parameters with these particular choices  of $\tilde{M}$ and $\hat{M}$. For both Cases I and II, direct solvers are used to solve the subsystems of linear equations appearing in the implementation of the TSTMR  and MRHSS methods. More precisely,  the subsystems with SPD coefficient matrices are solved by using the sparse Cholesky factorization with the symmetric approximate minimum degree (SYMAMD) reordering available in MATLAB. The LU factorization in combination with the same reordering (for $\alpha_1,\alpha_3$) or in combination with column approximate minimum degree (COLAMD) reordering (for $\alpha_2,\alpha_{\rm exp}$) is used for solving the shifted linear systems associated with the skew-symmetric part of $A$. We comment that using COLAMD instead of SYMAMD results in a better CPU time for the MRHSS method when the shift on $\mathcal{S}(A)$ is very small.

In Table \ref{tab1}, the values of $\alpha_{\rm exp}$ and $\alpha_i$ ($i=1,2,3$) for the MRHSS method are chosen analogously to \cite[Tables 1 and 3]{Yang}. Specifically, it was mentioned  there that $\alpha_{\rm exp}$ is the experimentally found optimum value of the parameter. As seen, the value of $\alpha$ affects  the performance of  MRHSS significantly. This makes it crucial to have a practical strategy rather than a trial-and-error approach for finding a suitable value for the parameter in the MRHSS method.  Overall, in view of the reported numerical results and of its parameter-free nature,  it can be seen that the TSTMR method is far superior to MRHSS.		
	}
	\end{example}

In the following example, we use the proposed method for solving a three-by-three  linear systems of equations corresponding to a 3D coupled flow problem \cite{Cai,Chid}. For Example \ref{well2}, the proposed method works with a block triangular splitting of the coefficient matrix instead of its symmetric and skew-symmetric parts.

\begin{example}\label{well2}
{\rm   We consider the following linear system of equations 
	\begin{equation}\label{eqwll2}
		Ax = \left[ {\begin{array}{*{20}c}
				{A_{11} } & {A_{{12} } } & 0  \\
				{ A_{21 } } & {A_{22 } } & {B^T }  \\
				0 & B & 0  \\
		\end{array}} \right]\left[ {\begin{array}{*{20}c}
				{u_1 }  \\
				{u_2 }  \\
				{u_3}  \\
		\end{array}} \right] = \left[ {\begin{array}{*{20}c}
				{b_1 }  \\
				{b_2 }  \\
				{b_3}  \\
		\end{array}} \right] = b,
	\end{equation}
	where $A_{11}$ and $A_{22}$  are both SPD, $A_{21}:=-A_{12}^T$ and $B$ has full row rank.
Here, the linear system of equations \eqref{eqwll2} arises from finite element discretizations of the coupled Stokes-Darcy flow problem examined in \cite[Subsection 5.3]{Chid}. 
	}	
	{\rm
	\begin{table}{
			\centering
			\caption{Example \ref{well2}: Comparison results between proposed method and the FGMRES method in conjunction with AL-based preconditioner}
			{
				\begin{tabular}{@{}lcccccccccccc@{}}
					\toprule
					
					size && \multicolumn{5}{c}{{Proposed method}} & \phantom{abc} &\multicolumn{5}{c}{FGMRES}  \\
					\cmidrule{3-7}\cmidrule{9-13} 
					&& Iter && CPU && Err && Iter && CPU && Err   \\
					\midrule
					1695  && 6  && 0.05 && 9.9871e-07 && 19 && 0.06 && 1.4624e-03  \\
					10809 && 6  && 0.42 && 5.2198e-06 && 17 && 0.76 && 4.0111e-04 \\
					76653 && 6  && 4.39 && 4.6253e-05 && 19 && 7.34 && 2.9103e-03  \\
					576213&& 6  && 79.7 && 1.0826e-06 && 26 && 79.3&& 2.2531e-03\\
					\bottomrule	
				\end{tabular}	\label{tabwell2}
			}
			
	}	
		
	\end{table}

}
\end{example}

	Krylov subspace methods (such as the GMRES method) in conjunction with appropriate preconditioners have been an effective approach to the solution  of the discrete coupled Stokes-Darcy equations, see \cite{BB2022,Chid,Cui} and the references therein.  Noting that in practice the preconditioners must be applied inexactly when the underlying PDE problem is 3D, the numerical experiments reported in  \cite{BB2022} indicate that among all examined inexact variants of preconditioners, the augmented Lagrangian (AL) based preconditioner with IC-CG inner solvers leads to the fastest convergence speed of the FGMRES method in term of total solution times by a large margin comparing to the preconditioners proposed in \cite{Chid,Cui}. Therefore, here, we only report comparison results between the proposed method and the FGMRES method in conjunction with AL-based preconditioner with the most efficient implementation  in \cite{BB2022}.

To apply the proposed method, we consider the splitting $A=\bar{M}-\bar{N}$ where
	\begin{equation}\label{eq1810}
		\bar{M} := \left[ {\begin{array}{*{20}{c}}
				{{A_{11}}}&{{A_{12}}}&0\\
				0&{{A_{22}}}&{{B^T}}\\
				0&B&0
		\end{array}} \right].
	\end{equation}
With a strategy similar to the one used in  \cite{Chid}, one can verify that  $\bar{M}$ is FoV-equivalent\footnote{For more details on concept of FoV-equivalence, we refer the reader to \cite{Loghin}.} to the coefficient matrix $A$ in \eqref{eqwll2} for a certain choice of inner products which implies  $0\notin \mathcal{F}(\bar{M}^{-1}A)$. This motivates us to apply the TSTMR method with $\tilde{M}=\hat{M}=	\bar{M}$. Hence, in this case the TSTMR method reduces to a one-step iterative method.
In practice, we approximate the action of $\bar M^{-1}$ using FGMRES (with a loose stopping residual tolerance $0.05$) preconditioned by
\begin{equation*}
	P = \left[ {\begin{array}{ccc}
			I&0&0\\
			{{{0}}}&I&0\\
			0&B\hat A_{22}^{-1}&I
	\end{array}} \right]\left[ {\begin{array}{ccc}
			{{\hat A_{11}}}&A_{12}&0\\
			{{{0}}}&{{\hat A_{22}}}&{{B^T}}\\
			0&0&-{\rm diag}(M_p)
	\end{array}} \right].
\end{equation*}
Here  $\hat A_{11}$ and
$\hat A_{22}$ are approximations of 
$A_{11}$ and $A_{22}$ obtained via incomplete Cholesky factorizations constructed by MATLAB
function
``\verb|ichol|(., \verb|opts|)" and MATLAB backslash operator ``$\backslash$", 
with {\verb|opts.type| ='\verb|ict|'} and {\verb|opts.droptol| =$\epsilon_i$} where $\epsilon_i$ is equal to  $10^{-3}$ and $10^{-2}$ for $i=1,2$, respectively. 
Also, $M_p$ denotes the mass matrix coming  from the Stokes pressure; see \cite{Elman} for more details.

In Table \ref{tabwell2}, we show the results comparing the proposed method with the fastest  approach in \cite{BB2022}. In term of the accuracy of obtained approximate solutions, the proposed method outperforms the preconditioned FGMRES method. For each individual problem size, the method is competitive with the FGMRES method in term of CPU times for convergence with respect to the stopping criterion \eqref{cer}.

	\subsection{Experimental results for some ill-posed test problems}

	This section is devoted to numerically examining the applicability of the proposed method to solve systems of the form \eqref{eq1n} with $\mu>0$.  
	 Since the coefficient matrix in \eqref{eq1n} is symmetric positive definite, the system can be solved, in principle, by the conjugate gradient method.
	The performance of this method, however, is highly sensitive to the choice of the regularization parameter, and can be quite poor for very small $\mu$.
	In the following, we  solve three test problems from Hansen's package \cite{Hansen1994} and compare the performances of the TSTMR and MSHSS \cite{Cui} methods.  For solving these test problems, we  apply the iterative methods to solve $Kx=b$  given by Eq. \eqref{Kmatrix} in which the value of the regularization parameter $\mu$ is estimated by \fb{generalized cross validation (GCV)  \cite{Golub2}, which is one of the commonly employed approaches for determining the regularization parameter in Tikhonov regularization}. In this part, the iterative methods are terminated once ${{\|{b-K x^{(k)}}\|}}/{{\|b\|}}\leq 10^{-6}$ or when the maximum number of 100 iterations is reached. Here, the initial vector is zero and $x^{(k)}$ refers to the $k$th approximate solution as before.
	
	For solving the following example, in the TSTMR method, we set $\tilde{M}=\mathcal{H}(K)$ and
		\begin{equation}\label{mhat}
		\hat{M}	=\left[ {\begin{array}{*{20}{c}}
				I&A\\
				{ - {A^T}}&{\gamma I}
		\end{array}} \right]\,,
	\end{equation}
	with $\gamma > \mu^2$.

	\begin{example}\label{ex2}
		{\rm Consider the block system \eqref{eq20} with $g$ contaminated by noise such that $g = \tilde{g} + 0.01 \times  \text{rand}(\text{size}(\tilde{g} ))$ where the matrix $A\in \mathbb{R}^{n\times n}$ and the vector $\tilde{g}\in \mathbb{R}^{n}$ are constructed with MATLAB function
			$[A,f,\tilde{g}]={\rm \bf Problem}(n)$ where ${\rm \bf Problem}(n)$ is set to be {\bf foxgood}($n$), {\bf gravity}($n$), and {\bf phillips}($n$), respectively. 
			The condition number and the number of nonzero ($nnz$) entries of $A$,  associated with these test problems, are summarized in Table \ref{tabp}. Notice
			that these linear systems are dense.
			}
	\end{example}

	\begin{table}[h]
		\centering
		\caption{Examples \ref{ex2}: Condition number and  $nnz$ of the matrix $A$}
		{{
				\begin{tabular}{ccccccccccccc}
					\toprule
					&\multicolumn{5}{c}{Condition number}&&&\multicolumn{5}{c}{nnz} \\
					\cmidrule{2-6}\cmidrule{9-13}   $n$&\textsf{foxgood}($n$)&&\textsf{gravity}($n$)&&\textsf{phillips}($n$)	&&&\textsf{foxgood}($n$)&&\textsf{gravity}($n$)&&\textsf{phillips}($n$)\\
					\midrule
					900 &$6.3540{\rm e}{+20}$&&$9.7128{\rm e}{+20}$&&$1.7316{\rm e}{+10}$&&&810000&&810000&&355050 \\
					2500 &$1.7487{\rm e}{+21}$&&$1.1300{\rm e}{+21}$&&$7.5805{\rm e}{+11}$&&&6250000&&6250000 &&2736250\\
					4900 &$8.2841{\rm e}{+21}$&&$ 1.4201{\rm e}{+21}$&&$1.3578{\rm e}{+14}$&&& 24010000&&24010000 &&10508050\\
					\bottomrule
		\end{tabular}}}%
		\label{tabp}%
		
		\end{table}%
		\begin{table}[h]
		\centering
		\caption{Example \ref{ex2}: Numerical results for Experiment I with GMRES as inner solver}
		{{
				\begin{tabular}{lccccccccccc}
					\toprule
					&	&\multicolumn{4}{c}{TSTMR}&&&\multicolumn{4}{c}{MSHSS} \\
					\cmidrule{3-6}\cmidrule{8-12}   Problem&& Iter (CPU)&Err&&Res&&& Iter (CPU)&Err&&Res \\
					\midrule
					\textsf{foxgood}(900)&&4(0.11)&0.0468&&0.0111&&&100(2.51)&0.0480&&0.0241\\
					\textsf{gravity}(900)&$$&6(0.36)&0.0106&&0.0011&&&100(5.86)&0.0126&&0.0011\\
					\textsf{phillips}(900)&&5(0.46)&0.0353&&0.0098&&&23(1.68)&0.0358&&0.0098 \\
					\midrule
					\textsf{foxgood}(2500)&&4(0.83)&0.0490&& 0.0112&&&100(19.7)&0.0533&&0.0243 \\
					\textsf{gravity}(2500)&$$&5(2.76)&0.0106&& 0.0011&&&100(48.5)&0.0106&&0.0011 \\
					\textsf{phillips}(2500)&&6(3.56)&0.0414&&0.0162&&&23(13.2)&0.0417&&0.0163\\
					\midrule
					\textsf{foxgood}(4900)&&3(2.44)&0.0424&&0.0112&&&100(72.3)&0.0466&&0.02505 \\
					\textsf{gravity}(4900)&$$&5(9.78)&0.0106&&0.0011&&&100(185)& 0.0081&&0.0011 \\
					\textsf{phillips}(4900)&&7(17.4)&0.0719&& 0.0229&&&42(93.0)&0.0730&&0.0229\\
					\bottomrule
		\end{tabular}}}%
		\label{tab21}%
	\end{table}%

	
	\begin{table}[h]
		\centering
		\caption{Example \ref{ex2}: Numerical results for Experiment II with GMRES as inner solver}
		{{
				\begin{tabular}{lccccccccccc}
					\toprule
					&	&\multicolumn{4}{c}{TSTMR}&&&\multicolumn{4}{c}{MSHSS} \\
					\cmidrule{3-6}\cmidrule{8-12}   Problem&& Iter (CPU)&Err&&Res&&& Iter (CPU)&Err&&Res \\
					\midrule
					\textsf{foxgood}(900)&&3(0.09)&0.0340&&0.0111&&&100(2.76)&0.0360&&0.0114\\
					\textsf{gravity}(900)&&2(0.14)&0.0095&&0.0010&&&21(1.48)&0.0101&&0.0011\\
					\textsf{phillips}(900)&&3(0.09)&0.0470&&0.0112&&&100(2.78)&0.0574&&0.0118 \\
					\midrule
					\textsf{foxgood}(2500)&&2(0.46)&0.0427&&0.0111&&&100(21.5)&0.0498&&0.0118 \\
					\textsf{gravity}(2500)&&2(1.05)&0.0100&&0.0010&&&54(30.2)&0.0196&&0.0011 \\
					\textsf{phillips}(2500)&&3(1.80)&0.0458&&0.0163&&&5(3.04)&0.0460&&0.0163\\
					\midrule
					\textsf{foxgood}(4900)&&3(1.90)&0.0413&&0.0111&&&100(80.7)&0.0514&&0.0121 \\
					\textsf{gravity}(4900)&&2(3.95)&0.0100&&0.0011&&&47(99.1)&0.0091&&0.0011\\
					\textsf{phillips}(4900)&&3(8.44)&0.0845&&0.0228&&&9(23.6)&0.0865&& 0.0228\\
					\bottomrule
		\end{tabular}}}%
		\label{tab31}%
		\end{table}%

		\begin{table}[h]
		\centering
		\caption{Example \ref{ex2}: Numerical results for CGW and inexact TSTMR  methods ($\gamma=\mu^2 +0.01$ and max$_{\rm itcg}=20$)}
		{{
				\begin{tabular}{lccccccccccc}
					\toprule
					&	&\multicolumn{4}{c}{TSTMR}&&&\multicolumn{4}{c}{CGW} \\
					\cmidrule{3-6}\cmidrule{8-12}   Problem&& Iter (CPU)&Err&&Res&&& Iter (CPU)&Err&&Res \\
					\midrule
					\textsf{foxgood}(900)&&6(0.03)&0.0414&&0.0111&&&9(0.03)&0.0425&&0.0111 \\
					\textsf{gravity}(900)&&5(0.03)&0.0110 &&0.0011&&&63(0.18)&0.0602&&0.0011\\
					\textsf{phillips}(900)&&6(0.03)&0.0339&&0.0098&&&49(0.14)&0.0346&&0.0098 \\
					\midrule
					\textsf{foxgood}(2500)&& 7(0.24)&0.0415&&0.0111&&&10(0.22)&0.1116&&0.0111  \\
					\textsf{gravity}(2500)&& 5(0.33)&0.0108&&0.0011&&&46(1.02)&0.0093&&0.0011\\
					\textsf{phillips}(2500)&&7(0.41)&0.0484&&0.0163&&&53(1.16)&0.0493&&0.0163\\
					\midrule
					\textsf{foxgood}(4900)&&3(0.43)& 0.0421&&0.0111&&&7(0.60)&0.0736&&0.0111 \\
					\textsf{gravity}(4900)&&5(1.37)&0.0103&&0.0011&&&104(8.84)&0.1565&& 0.0018\\
					\textsf{phillips}(4900)&& 8(2.04)&0.0677&&0.0229&&&54(4.47)&0.0688&&0.0229\\
					\bottomrule
		\end{tabular}}}%
		\label{tab31inexact}%
	\end{table}%

	\begin{table}[h]
		\centering
		\caption{Example \ref{ex2}: Information on real part of $\mathcal{F}(K(\Omega +\mathcal{S}(K))^{-1})$  for Experiment I}
		{{
				\begin{tabular}{lccccccccc}
					\toprule
					&	&\multicolumn{2}{c}{$n=900$}&&\multicolumn{2}{c}{$n=2500$}&&\multicolumn{2}{c}{$n=4900$} \\
					\cmidrule{3-4}\cmidrule{6-7}\cmidrule{9-10}   Problem&&  \eqref{eq28n} &Interval \eqref{eq26n}&&\eqref{eq28n}&Interval \eqref{eq26n}&&  \eqref{eq28n} &Interval \eqref{eq26n} \\
					\midrule
					\textsf{foxgood}($n$)&& \xmark  &(-0.0477,1.0499)&&\xmark &(-0.0488,1.0500)&&\xmark &(-0.0500,1.0500)\\
					\textsf{gravity}($n$)&& \xmark  &(-0.0330,1.0496)&&\xmark &(-0.0385,1.0497)&&\xmark &(-0.0455,1.0499)\\
					\textsf{phillips}($n$)&& $\checkmark$ &(0.1746,1.0442)&&$\checkmark$&(0.13014,1.0454)&&$\checkmark$&(0.1192,1.0457)\\
					\bottomrule
		\end{tabular}}}%
		\label{tabbound1}%
	\end{table}%

		\begin{table}[h]
		\centering
		\caption{Example \ref{ex2}: Information on real part of $\mathcal{F}(K(\Omega +\mathcal{S}(K))^{-1})$  for Experiment II}
		{{
				\begin{tabular}{lccccccccc}
					\toprule
					&	&\multicolumn{2}{c}{$n=900$}&&\multicolumn{2}{c}{$n=2500$}&&\multicolumn{2}{c}{$n=4900$} \\
					\cmidrule{3-4}\cmidrule{6-7}\cmidrule{9-10}   Problem&&  \eqref{eq28n} &Interval \eqref{eq26n}&&\eqref{eq28n}&Interval \eqref{eq26n}&&  \eqref{eq28n} &Interval \eqref{eq26n} \\
					\midrule
					\textsf{foxgood}($n$)&& $\checkmark$ &(0.0078,1.0156)&& \xmark &(-0.0141,1.0158)  && \xmark &(-0.0152,1.0158)\\
					\textsf{gravity}($n$)&& $\checkmark$ &(0.1475,1.0145)&& $\checkmark$ &(0.0909,1.0150)&& $\checkmark$ &(0.0499,1.0153)\\
					\textsf{phillips}($n$)&&$\checkmark$&(0.6631,1.0091)&& $\checkmark$ &(0.6237,1.0096)&&$\checkmark$&(0.6653,1.0090) \\
					\bottomrule
		\end{tabular}}}%
		\label{tabbound2}%
	\end{table}%

	In \cite[Theorem 3.2]{Cui}, it is proved that the optimum values of $\alpha$ and $\gamma$ in the MSHSS method (see \eqref{eq23}) are
$\gamma^* \to \mu^2+$  (meaning that $\gamma^* > \mu^2$ should be chosen as close as possible to $\mu^2$)
and
\begin{equation}\label{eq40}
	{\alpha ^*} = \mathord{\buildrel{\lower3pt\hbox{$\scriptscriptstyle\frown$}}
		\over \alpha } ({\gamma ^*}) = \frac{{{\gamma ^*}(\sigma _1^2 + \sigma _n^2) + 2\sigma _1^2\sigma _n^2}}{{2{\gamma ^*} + \sigma _1^2 + \sigma _n^2}}
\end{equation}
where $\sigma_1$ and $\sigma_n$ stand for the extreme singular values of $A$. 
Note that when $\sigma_1 = 1$ and $\sigma_n = 0$ (as is reasonable to assume
for discrete ill-posed problems) this expression reduces to $\alpha^* = \mu^2/(2\mu^2 +1)$ in the limit $\gamma^* \to \mu^2$.

We have observed that in practice, the value of $\gamma^*$ in \eqref{eq40} may have a substantial effect on determining the optimum value of $\alpha$ ($\alpha^*$).  
As a matter of fact, the above formula does not provide a suitable approximation
for the optimum value of $\alpha$ when $\gamma$ is not sufficiently close to $\mu^2$.
In order to show this,  we report the results associated with two different values for $\gamma$, i.e., $\gamma=\mu^
2 +0.01$ ({Experiment I}) and  $\gamma=\mu^
2 +0.001$ ({Experiment II}).
To solve the shifted skew-symmetric subsystem inside each iteration of TSTMR and MSHSS, we use  GMRES with no restarting, stopping the iterations once the relative residual 2--norm has been reduced below $10^{-6}$. 
In Tables \ref{tab21} and \ref{tab31} we report the numerical results obtained for Experiments I and II, respectively.
In addition, we also report the value of the norms of the residual vectors associated with the computed approximations to the solution of \eqref{eq1}, i.e.,
\[\text{Res}:= \|\tilde{g} -Af^{(k)}\|/\|\tilde{g}\|,\]
where $x^{(k)}=[e^{(k)};f^{(k)}]$.
Clearly, the proposed TSTMR method outperforms the  MSHSS method on these examples.   We also found the TSTMR method to be much more robust than the CG method
applied to \eqref{eqn1} with respect to the value of $\mu$.

In the following numerical tests, for a more efficient implementation, we apply the inexact version of the TSTMR method to solve \eqref{eq20}. 
To approximate the action of $\hat{M}^{-1}$, we solve the linear systems of equations 
	\[
	\hat{M} [x_1;x_2]=[b_1;b_2]
	\] 
	inexactly, using an inner iteration combined with a loose stopping tolerance.
	To this end, first, we consider the following equivalent linear system of equations
	\[\left[ {\begin{array}{*{20}{c}}
			I&B\\
			{ - {B^T}}&I
	\end{array}} \right]\left[ {\begin{array}{*{20}{c}}
			{{ x_1}}\\
			{{\tilde  x_2}}
	\end{array}} \right] = \left[ {\begin{array}{*{20}{c}}
			{{ b_1}}\\
			{{\tilde  b_2}}
	\end{array}} \right]\]
	where $B=\frac{1}{\sqrt{\gamma}}  A$, $\tilde x_2=\sqrt{\gamma}x_2$ and $\tilde b_2=\frac{1}{\sqrt{\gamma}} b_2$. Then, the approximate solution of the above linear systems of equations is determined in two steps:
	\begin{itemize}
		\item To find $\tilde x_2$, the linear system of equations $(I+B^TB)\tilde x_2=\tilde b_2 + B^T b_1$ is solved by the Conjugate Gradient (CG) method  with a relative residual tolerance of $10^{-2}$  and  a prescribed maximum allowed number of iterations reported above tables under ``max$_{\rm itcg}$". We 
		emphasize that the matrix $I+B^TB$ is not formed explicitly.
		\item We set $ x_1=b_1-B \tilde x_2$.
	\end{itemize}
	This procedure yields an approximate solution for $\hat{M} [x_1;x_2]=[b_1;b_2]$.
	
	 For the sake of comparison, in Example \ref{ex2}, the performance of Concus, Golub, and Widlund (CGW) method \cite[Section 9.6]{Saad} is also reported for solving \eqref{eq20}. This method is in principle well-suited for systems with coefficient matrices of
	the form ``diagonal plus skew-symmetric," as in \eqref{eq20}. 
	The corresponding results are reported in Table \ref{tab31inexact},  showing the efficiency of the inexact implementation of the proposed method and its superiority to the
	CGW algorithm.

In Tables \ref{tabbound1} and \ref{tabbound2} we report the bounds obtained in Theorem \ref{th3.3} for additional insight. In these two tables, we also used the symbol ``$\checkmark$" (``\xmark ") when
the condition  \eqref{eq28n} in Remark \ref{th3.2} is (not) satisfied. The results in Table \ref{tab31} illustrate that MRHSS needs to work with smaller $\gamma$ to be comparable with TSTMR even in the cases that sufficient condition \eqref{eq11}
holds, which usually corresponds to the case when $\gamma$ is quite close to $\mu^2$, see Table \ref{tabbound2}.

	\subsection{Performance of TSTMR as a regularization method}
	It is known that, in practice, the choice of the regularization parameter via GCV  can be expensive. An alternative is \fb{to exploit the regularization
	property of iterative methods, see for example \cite{Hansen2010}. Numerical experiments show that the TSTMR method acts as 
	an iterative regularization method. Therefore, in the sequel, we apply the proposed method for solving the following (non-regularized) block system}
	\begin{equation*}
		\underbrace {	\left[ {\begin{array}{*{20}{c}}
					I&A\\
					{ - {A^T}}&{0}
			\end{array}} \right]}_{K_0} \left[ {\begin{array}{*{20}{c}}
				e\\
				f
		\end{array}} \right]= \left[ {\begin{array}{*{20}{c}}
				g\\
				0
		\end{array}} \right].	
	\end{equation*}
	Evidently, solving the above system is mathematically equivalent to solving the normal equations $A^TAf=A^Tg$. To implement the TSTMR method,  we work with the splittings $K_0=\tilde{M}-\tilde{N}=\hat{M}-\hat{N}$ where we choose $\tilde{M}=I$ and $\hat M$ is defined by \eqref{mhat}	with $\gamma=0.001$.

	In the sequel, a test problem from \cite{Gazzola} is considered for which the matrix $A$ in \eqref{eq1}
	is non-square. We experimentally compare the performance of the TSTMR method with the CGLS method and a hybrid version of LSQR \cite{Chung}. To this end, the MATLAB codes in IR Tools package \cite{Gazzola} are exploited for solving the test problem.  
	In addition, we used the  \texttt{IRcgls} code in which the regularized solution is determined by terminating the CGLS iterations setting the regularization parameter equal to zero, i.e., $\mu=0$. 
	In this case the CGLS method is semi-convergent, see \cite{Hansen2010}. For a more comprehensive comparison, we further include the results obtained running the 
	\texttt{IRhybrid\_lsqr} code, which corresponds to a hybrid version of LSQR that applies a 2-norm penalty
	term to the projected problem. To be more specific, the regularization parameter was determined with two different strategies in  the hybrid version of LSQR, i.e., the discrepancy principle (DP) and  weighted GCV (WGCV). For clarification,  the terms \text{IRhybrid\_lsqr$^{*}$} and 	\text{IRhybrid\_lsqr$^{**}$} are respectively used to signify the cases that regularization parameter is determined by DP and WGCV. We refer the readers to \cite[Table 1]{Gazzola} for more details on the implementation of these approaches. 
	
	For all of the examined methods, the discrepancy principle is utilized as the stopping rule. More precisely, the iterations are terminated once
	\[
	\frac{{\|g -Ax_k\|}}{{\|g\|}}
	\leq \eta\cdot \texttt{NoiseLevel},
	\]
	here $\eta=1.01$ is a safety factor, and \texttt{NoiseLevel} stands for some estimate of the quantity ${\|e\|}/{\|\tilde{g}\|}$, where
	 $\tilde{g}$ denotes the (unknown) error--free vector associated with the right-hand side of \eqref{eq1}, i.e., $g=\tilde{g}+e$. The values for \texttt{NoiseLevel} are given in the captions of Tables \ref{tab9} and \ref{tab92}.

	\begin{example}\label{example4}
		{\rm
			We consider  the 2D fan-beam linear-detector tomography test problem from IR Tools toolbox. More precisely, we use the function $[A,\tilde{g},f]$ = {\bf fanlineartomo}$(n, \theta,p)$ inside the toolbox which exploits  the ``line model" to create a 2D X-ray tomography test	problem with an $n\times n$ pixel domain. The vector $\theta$ in the function includes  projection angles (in degrees) and the parameter $p$ is associated with the number of rays for each angle. Here we worked with default values of $\theta$ and $p$, i.e., $\theta = 0:2:358$ and $p = {\rm round(sqrt}(2)*n)$, resulting in a coefficient matrix $A$ of size $\ell \times n^2$ where $\ell={\rm length(\theta)}*p$.

	We report the numerical results for the above example in Tables \ref{tab9} and \ref{tab92}  for four different values of $n$. Although the proposed method  is not always the fastest in terms of CPU-time (especially for larger problems), its 
performance appears to be quite robust and the method results in smaller errors and
higher PSNRs than can be obtained with the other methods.
In the largest case, $A$ is approximately of size $25,000\times 10,000$ and contains just over $2,000,000$ nonzero entries.	
		
			\begin{table}[h]
	\centering
	\caption{Example \ref{example4}: Comparison results for  2D fan-beam linear-detector tomography test problem (\texttt{NoiseLevel}$=0.01$ and  and max$_{\rm itcg}=20$)}{
		\begin{tabular}{cllcccccc}
			\toprule   
			$n$&Determination of &Method&&&Err&CPU&Iter&PSNR\\
			&regularization parameter&&&&&&&\\
			\midrule
			\multirow{4}{*}{25} 
			&N/A&\text{TSTMR}                &&&  0.0320&0.01&2&42.3\\
			&N/A&\text{IRcgls}               &&&    0.0508&0.09&21&38.3\\
			&DP	&\text{IRhybrid\_lsqr$^{*}$}       &&&0.0490&0.10&23&38.6\\
			&WGCV	&\text{IRhybrid\_lsqr$^{**}$}   &&&  0.0532&0.11& 26&37.9\\
			\midrule
			\multirow{4}{*}{50} 	
			&N/A&\text{TSTMR}           &&& 0.0451&0.04&2&39.0\\
			&N/A	&\text{IRcgls}      &&& 0.0788&0.11&22&34.2\\
			&DP&\text{IRhybrid\_lsqr$^{*}$}   &&& 0.0759&0.12&24&34.6\\
			&WGCV&\text{IRhybrid\_lsqr$^{**}$} &&&0.0718&0.14&29&35.0\\
			\midrule
				\multirow{4}{*}{75} 	
			&N/A&\text{TSTMR}           &&&  0.0705&0.16&3&35.2\\
			&N/A	&\text{IRcgls}      &&& 0.1169&0.13&21&30.8\\
			&DP&\text{IRhybrid\_lsqr$^{*}$}   &&&  0.1148&0.14&23&30.9 \\
			&WGCV&\text{IRhybrid\_lsqr$^{**}$} &&& 0.1062&0.19&30&31.6\\
			\midrule
			\multirow{4}{*}{100} 
			&N/A	&\text{TSTMR}           &&&0.1268&0.29&3&30.3\\
			&N/A	&\text{IRcgls}          &&& 0.1749&0.17&21&27.5\\
			&DP	&\text{IRhybrid\_lsqr$^{*}$}  &&&  0.1706&0.19&23&27.7\\
			&WGCV&\text{IRhybrid\_lsqr$^{**}$}  &&& 0.1560&0.28&33&28.5\\
			\bottomrule
	\end{tabular}}%
	\label{tab9}%
\end{table}%

		\begin{table}[h]
	\centering
	\caption{Example \ref{example4}: Comparison results for  2D fan-beam linear-detector tomography test problem (\texttt{NoiseLevel}$=0.03$  and max$_{\rm itcg}=20$)}{
		\begin{tabular}{cllcccccc}
			\toprule   
			$n$&Determination of &Method&&&Err&CPU&Iter&PSNR\\
			&regularization parameter&&&&&&&\\
			\midrule
			\multirow{4}{*}{25} 
			&N/A&\text{TSTMR}                &&&0.0599&0.01&2&36.9\\
			&N/A&\text{IRcgls}               &&&0.1078&0.09&15&31.8\\
			&DP	&\text{IRhybrid\_lsqr$^{*}$}       &&&0.1155&0.08&15&31.2\\
			&WGCV	&\text{IRhybrid\_lsqr$^{**}$}   &&&0.1555&0.09&15&28.6\\
			\midrule
			\multirow{4}{*}{50} 	
			&N/A&\text{TSTMR}           &&&0.1066&0.04&2&31.6\\
			&N/A	&\text{IRcgls}      &&&0.1705&0.10&14&27.5\\
			&DP&\text{IRhybrid\_lsqr$^{*}$}   &&&0.1609&0.10&15&28.0\\
			&WGCV&\text{IRhybrid\_lsqr$^{**}$} &&&0.1901&0.12&17&26.6\\
			\midrule
					\multirow{4}{*}{75} 	
			&N/A&\text{TSTMR}           &&& 0.1675&0.10&2&27.7\\
			&N/A	&\text{IRcgls}      &&& 0.2231&0.11&12&25.2 \\
			&DP&\text{IRhybrid\_lsqr$^{*}$}   &&& 0.2254&0.12&12&25.1\\
			&WGCV&\text{IRhybrid\_lsqr$^{**}$} &&& 0.2122&0.15&15&25.6 \\
			\midrule
			\multirow{4}{*}{100} 
			&N/A	&\text{TSTMR}           &&&0.2406&0.21&2&24.7\\
			&N/A	&\text{IRcgls}          &&&0.2550&0.12&11&24.2\\
			&DP	&\text{IRhybrid\_lsqr$^{*}$}  &&& 0.2502&0.14&12&24.3\\
			&WGCV&\text{IRhybrid\_lsqr$^{**}$}  &&&0.2396&0.19&16&24.7\\
			\bottomrule
	\end{tabular}}%
	\label{tab92}%
\end{table}%
		
}
\end{example}

	\section{Conclusions}\label{sec5}
	In this paper we have introduced a new 
	class of two-step iterative methods for solving linear systems of equations. To construct these type of  methods, the approximate solution at each sub-step of a standard two-step iterative method is computed by minimizing  its residual norm over a certain two-dimensional subspace. The resulting approach is called the TSTMR method and, under certain conditions, can be proved to either converge in the limit or to break down after determining the exact solution in a finite number of steps. 
	Furthermore, we showed how the method can be adapted to solve a class of augmented systems corresponding to (shifted) normal equations 
	associated with least-squares problems arising from the discretization of ill-posed problems. An advantage of this method over established Krylov
	subspace solvers like GMRES is the low amount of memory required  for its implementation.
	
For well-posed problems, first, we demonstrated experimentally that the TSTMR method outperforms the MRHSS method for test problems of the convection-diffusion type examined in \cite{Yang}. We further used the proposed method for solving a block linear system of equations corresponding to a 3D coupled Stokes-Darcy flow problem. We observed that the proposed method provides more accurate solutions in comparison with the best approach in the recent paper \cite{BB2022} while both methods exhibit similar performance in terms of the required CPU times to satisfy a given stopping criterion. For discrete ill-posed problem arising in  tomography,
	the competitiveness of the TSTMR method with several other popular iterative schemes was also demonstrated by numerical experiments. 
	
	Future work should focus on analyzing the convergence properties of the proposed methods when implemented inexactly. While we found experimentally
	that the convergence of the method is not much affected by inexact inner solves, a better theoretical understanding of the convergence of inexact variants
	of TSTMR would be desirable. \fb{Another topic which may be worth investigating is the behavior of the proposed method when the linear system of equations 
	is singular but consistent.} \\
	
\section*{Acknowledgments}

\noindent The authors would like to thank Scott Ladenheim for providing the test problem in Example \ref{well2}. The work of M. Benzi was supported in part by ``Fondi per la Ricerca di Base" of the Scuola Normale Superiore di Pisa. Thanks also to three anonymous referees for their helpful suggestions.

	\bibliographystyle{abbrvnat}	

	\bibliography{refs.bib}

\end{document}